\newtheorem{theorem}{Theorem}[section]
\newtheorem{prop}{Proposition}[section]
\newtheorem{corollary}[theorem]{Corollary}
\theoremstyle{definition}
\newtheorem{example}[theorem]{Example}
\theoremstyle{remark}
\numberwithin{equation}{section}
\begin{document}
%%-----------------------------
%%      the top matter
%%-----------------------------

\title{Steklov Eigenvalue Problems on Nearly Spherical and Nearly Annular Domains}\thanks{The authors acknowledge partial support from NSF grant DMS-2208373.}

\author{Nathan Schroeder}
\address{E-mail: nathan.schroeder@cgu.edu; Address: Institute of Mathematical
Sciences, Claremont Graduate University, Claremont, CA 91711}

\author{Weaam Alhejaili}
\address{E-mail: weaalhejali@pnu.edu.sa; Address: Department of Mathematical Sciences, College of Science, Princess Nourah bint Abdulrahman University, P.O.Box 84428, Riyadh 11671, Saudi Arabia}

\author{Chiu-Yen Kao}
\address{E-mail: ckao@cmc.edu; Address: Department of Mathematical Sciences,
Claremont McKenna College, Claremont, CA 91711}

\subjclass{35P15,49Q10,65N25}
\keywords{Steklov eigenvalue; eigenvalue optimization; spherical harmonic functions; addition theorem}

%\date{...}
%

\begin{abstract}
We consider Steklov eigenvalues on nearly spherical and nearly annular domains in $d$ dimensions. By using the Green-Beltrami identity for spherical harmonic functions, the derivatives of Steklov eigenvalues with respect to the domain perturbation parameter can be determined by the eigenvalues of a matrix involving the integral of the product of three spherical harmonic functions. By using the addition theorem for spherical harmonic functions, we determine conditions when the trace of this matrix becomes zero. These conditions can then be used to determine when spherical and annular regions are critical points while we optimize Steklov eigenvalues subject to a volume constraint. In addition, we develop numerical approaches based on particular solutions and show that numerical results in two and three dimensions are in agreement with our analytic results. 
\end{abstract}
%
%\begin{resume} ... \end{resume}
%

%
\maketitle
%%-----------------------------
%%      your text
%%-----------------------------
\section{Introduction}

We consider the Steklov eigenvalue problem on  a bounded open set $\Omega \subset \mathbb{R}^d$:
\begin{equation}
\begin{cases}\label{E:StekProbGen}
\Delta u \ = \ 0 & \text{in}\ \Omega,\\
\partial_n u \ = \ \sigma u & \text{on}\ \partial \Omega.
\end{cases}
\end{equation}\\
\noindent This problem appears while modeling the dynamics of a sloshing liquid  \cite{kuznetsov2014legacy} and while studying the Dirichlet-to-Neumann map \cite{uhlmann2014inverse}. A number $\sigma$ for which the Steklov problem has a non-trivial solution $u$ is called a \textit{Steklov eigenvalue of $\Omega$} and the collection of all Steklov eigenvalues is called the \textit{Steklov spectrum of $\Omega$}.  If we further assume that $\partial \Omega$ is Lipschitz,  then the Steklov spectrum is discrete starting with the eigenvalue 0 and proceeding through an increasing sequence of  finite multiplicity eigenvalues that diverge to infinity, i.e., $ 0 =  \sigma_0 \ \le \ \sigma_1 \ \le \ \sigma_2 \ \le \ \cdots \ \nearrow \infty$ \cite{bandle1980isoperimetric}. The eigenvalues have a variational characterization, 
\begin{equation} \label{eq: lambda_k}
\sigma_{k}(\Omega)=
\min_{v\in H^{1}(\Omega)} \ 
\left\{ \frac{\int_{\Omega}\left|\nabla v \right|^{2}dx}{\int_{\partial\Omega}v^{2}ds}\colon \  
\int_{\partial\Omega}vu_{j}=0, \ 
j=0,\ldots,k-1\right\}
\end{equation} 
where $u_j$ is the corresponding $j-$th eigenfunction.  It is well-known that, by using separation of variables \cite{girouard2017spectral, dittmar2004sums, martel2014spectre}, the Steklov spectrum can be determined explicitly on balls and annular domains (See Example \ref{E:dimdBallEigs}, Example \ref{E:ExampleAnnEigs}, and Appendix \ref{AnnEigFormulas}).

The problem of finding a global optimizing shape for a Steklov eigenvalue $\sigma_k$ among a constrained family of admissible shapes is classic and has been studied extensively.  For instance, Weinstock \cite{weinstock1954inequalities} showed that, among planar domains with fixed perimeter, the first Steklov eigenvalue $\sigma_1$ is maximized by a disk provided that $\Omega$ is simply-connected. Furthermore, for simply-connected planar domains with fixed perimeter, the $k$-th Steklov eigenvalue with $k\ge 1$ is maximized in the limit by a disjoint union of $k$ identical disks \cite{girouard2010}. In higher dimension $d\ge3$, Brock  \cite{brock2001isoperimetric} showed that the ball maximizes $\sigma_1$ among open sets of a given volume.   Ftouhi \cite{ftouhi2022place}  considers the family of  doubly connected domains of the form $\mathbb{B}_1\setminus \mathbb{B}_2$ with $\overline{\mathbb{B}_2}\subset \mathbb{B}_1$,  where $\mathbb{B}_1$ and  $\mathbb{B}_2$ are not necessarily concentric $d$-dimensional balls.  It is shown that among all such domains  the first Steklov eigenvalue $\sigma_1$ is maximized uniquely when the balls are concentric. A comprehensive review of these types of Steklov eigenvalue problems can be found in  \cite{girouard2017spectral}. 

An alternative type of  Steklov eigenvalue problem asks when a given initial shape $\Omega$ locally optimizes a Steklov eigenvalue $\sigma_k$ among all nearby shapes that are small perturbations of $\Omega$ by members of some fixed class of deformation fields.   In this paper, we study the local optimization problem in $d\geq2$ dimensions where the initial domain is either spherical or annular; and  the nearby perturbed \textit{nearly spherical} or \textit{nearly annular} domains are induced by a  smooth volume preserving at first order deformation field.  We base our approach on fundamental results from Dambrine et al. \cite{dambrine2014extremal}, where they show, for instance,  that any ball in dimension $d=2$ or $d=3$ locally maximizes the first Steklov eigenvalue $\sigma_1$ under smooth, volume preserving perturbation.   Viator and Osting \cite[Theorem 1.1]{viator2022steklov} show in dimension $d=3$ that  for any $k = 1,2,3,\cdots$,  a ball $B$ in $\mathbb{R}^3$ is stationary for $\sigma_{k^2}$.  In dimension $d=2$, Quinones shows that an annulus is, in an appropriate sense, locally critical for the $\sigma_1$ when perturbed by smooth, perimeter length preserving deformation fields, see \cite[Proposition 5]{quinones2019critical}. Viator and Osting rule out any disk in dimension $d=2$ as a  local maximizer of  the Steklov eigenvalues $\sigma_{2k}$, for $k =1,2,3,\cdots$, see the discussion following \cite[Theorem 4,3]{viator2018steklov}.  

Turning to numerical techniques, we note that  to solve extremal Steklov eigenvalue problems most approaches start with an initial guess of the domain and deform it iteratively based on a gradient-ascent approach until it converges to an optimal domain. Numerical methods based on finite element approaches \cite{bonder2007optimization,sayed2021maximization} can handle complex geometries and allow adaptive meshes to improve accuracy and efficiency but they require a mesh on the whole domain $\Omega$.  To reduce the computational cost and achieve high accuracy, methods which only require discretization of the boundary $\partial \Omega$ are preferred, e.g., conformal mapping approaches \cite{alhejaili2019numerical,alhejaili2019maximal,oudet2021computation,kao2023computational}, boundary integral methods \cite{akhmetgaliyev2017computational,ammari2020optimization}, method of particular solutions (MPS)\cite{oudet2021computation,kao2023computational}, and method of fundamental solutions (MFS) \cite{bogosel2016method,bogosel2017optimal,antunes2021numerical}. Advances in these numerical techniques enabled the discovery of local maximizers of $\sigma_k$ subject to a fixed volume constraint in dimension two 
\cite{bogosel2016method,akhmetgaliyev2017computational,alhejaili2019numerical} and higher \cite{antunes2021numerical}. In two dimensions, the optimal domains of $\sigma_k$ looks like a ruffled edge pie dish. Comparable results are observed in both three- and four-dimensional calculations by MFS \cite{antunes2021numerical}. Recently, MPS methods have been developed to solve maximal Steklov eigenvalues problem among two-dimensional surfaces with zero genus and several boundary components \cite{oudet2021computation,kao2023computational}. In this paper, we further develop MPS methods to compute Steklov eigenvalues in both two and three dimensions and use it to study local perturbation problems. 

The organization and contributions of this paper are as follows. In Section 2, we review Steklov eigenvalues on a ball and an annular domain, referring to  Appendix \ref{AnnEigFormulas}, where we present general formulas for the eigenvalues and eigenvectors of annular domains in $d-$dimensions.  The remainder of section 2 reviews properties of spherical harmonic functions, ending with a derivation of  a triple product integral identity for $d$-dimensional spherical harmonic functions. In Section 3, we discuss  perturbations of Steklov eigenvalues. In particular, we review fundamental results from Dambrine-Kateb-Lamboley \cite{dambrine2014extremal} and restate their theorem concerning the indices of Steklov eigenvalues being reordered so that the eigenvalues become differentiable. We then introduce the subdifferential of eigenvalues and discuss critical conditions. In Section 4, the perturbation of Steklov eigenvalues on nearly spherical and nearly annular domains in $\mathbb{R}^d$ are derived. In Section 5 we present our main results, providing sufficient conditions for local criticality and local optimality for families of Steklov eigenvalues of either a spherical or annular domain in dimensions $d\geq 2$.   Numerical approaches based on MPS to solve Steklov eigenvalues are presented in Section 6 and obtained numerical results are consistent with theoretical studies. Section 7 concludes the findings and discusses future work.

%%%%%%%%%%%%%%%%%
%  SECTION 2
%%%%%%%%%%%%%%%%%

\section{Steklov Eigenvalues and Eigenfunctions on A Ball and An Annular Domain}\label{SphericalHarmonicExamples}

In this section we consider examples of  Steklov eigenvalues and eigenfunctions for  spherical and annular domains.   We also offer a brief overview of spherical harmonics, highlighting some properties required for the proof of our main results.

\begin{example}\label{E:dimdBallEigs} For each $l = 0,1,2,\cdots$, the Steklov problem on $\mathbb{B}^d_{r_o}$, the $d$-dimensional ball with radius $r_o$, has an eigenvalue $\sigma =\frac{l}{r_o}$  which repeats with multiplicity $N_{l,d}$
\begin{equation*}
0,\ \cdots \ \underbrace{\frac{l}{r_o},\cdots \frac{l}{r_o}}_{N _{l,d}},\ \cdots
\end{equation*}

\noindent where, according to \cite[formula 2.10]{atkinson2012spherical}, we have

\begin{equation*}
\begin{Small}\text{$
N_{l,d} = \left(\begin{array}{c}
d+l-1\\
d-1
\end{array}\right)-\left(\begin{array}{c}
d+l-3\\
d-1
\end{array}\right)=\frac{(d+2l-2)(d+l-3)!}{l!(d-2)!}.
$}\end{Small}
\end{equation*}

\noindent Furthermore, the eigenspace of $\sigma = \frac{l}{r_o}$ has a basis of eigenfunctions 

\begin{equation*}
\begin{Small}\text{$
u_l^m (r,\theta_1,\cdots,\theta_{d-1}) \ =\underbrace{ \ r_o^{-\frac{d-1}{2}}\left( \frac{r}{r_o}\right)^l }_{N(r,l,d)}Y_l^m(\theta_1,\cdots,\theta_{d-1})  \quad \quad m=1,\cdots,N_{l,d},  
$}\end{Small}
\end{equation*}

\noindent where  $\{Y_l^m:\ m = 1,\cdots,N_{l,d}\}$  is an arbitrary orthonormal  basis for the $d$-dimensional spherical harmonics of degree $l$, see  \cite[section 2.1]{atkinson2012spherical};  and we write $N(r,l,d)$ for the radial dependence $r$ of the dimension $d$ eigenfunctions of $\sigma = \frac{l}{r_o}$  orthonormalized on the boundary $\mathbb{S}^{d-1}_{r_0}$. 

 In Figure \ref{fig: dim3DiskEigFun} we show an internal view of eigenfunctions corresponding to the first four non-zero distinct Steklov eigenvalues of the unit ball $\mathbb{B}^3$ in $\mathbb{R}^3$.

\begin{figure}[h!]
\centering\includegraphics[width = 5in]{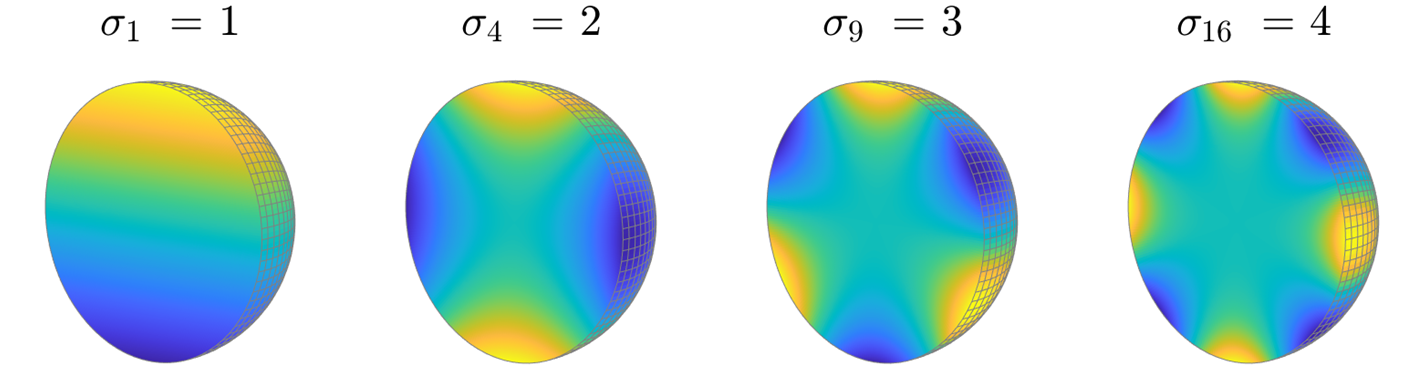}
\caption{Steklov eigenfunctions of the unit ball $\mathbb{B}^3$.}
\label{fig: dim3DiskEigFun}
\end{figure}
 
\end{example}

\begin{example}\label{E:ExampleAnnEigs}  As with spherical domains,  each eigenfunction of an annular domain can be written as a spherical harmonic multiplied by a radially dependent factor.  However, for annular domains the Steklov eigenvalues are no longer conveniently ordered  according to multiplicity. In general, because of the presence of a pair of independent boundary conditions, we have for each space of spherical harmonics $\mathbb{Y}_l^d$  that there  is  a pair of eigenvalues $\mu_{l,1}$ and $\mu_{l,2}$.  Where, for $k = 1,2$, the eigenspace of $\mu_{l,k}$ has  a basis $\{N(r_i,\mu_{l,k} ,d)Y_l^m$, $m =1,\cdots, N_{l,d}\}$. Here $Y_l^m$, $m =1,\cdots, N_{l,d}$ is an arbitrary basis for the  spherical harmonics of degree $l$, and each eigenfunction is boundary normalized by a radially dependent factor $ N(r_i,\mu_{l,k} ,d)$, which depends explicitly on the associated eigenvalue $\mu_{l,k}$. In Appendix \ref{AnnEigFormulas}, we provide general formulas for the eigenvalues and eigenfunctions of a $d$-dimensional annular domain $A^d_{r_i,r_o}$ with inner radius $r_i$ and outer radius $r_o$. 

In Figure \ref{fig: dim3AnnEigFun}, we show an internal view of a selection of eigenvalues and associated eigenfunctions of the three-dimensional annular domain $\mathbb{A}^3_{0.4,1}$ with inner radius $r_i = 0.4$ and outer radius $r_o = 1$.
\begin{figure}[h!]
\centering\includegraphics[width = 5in]{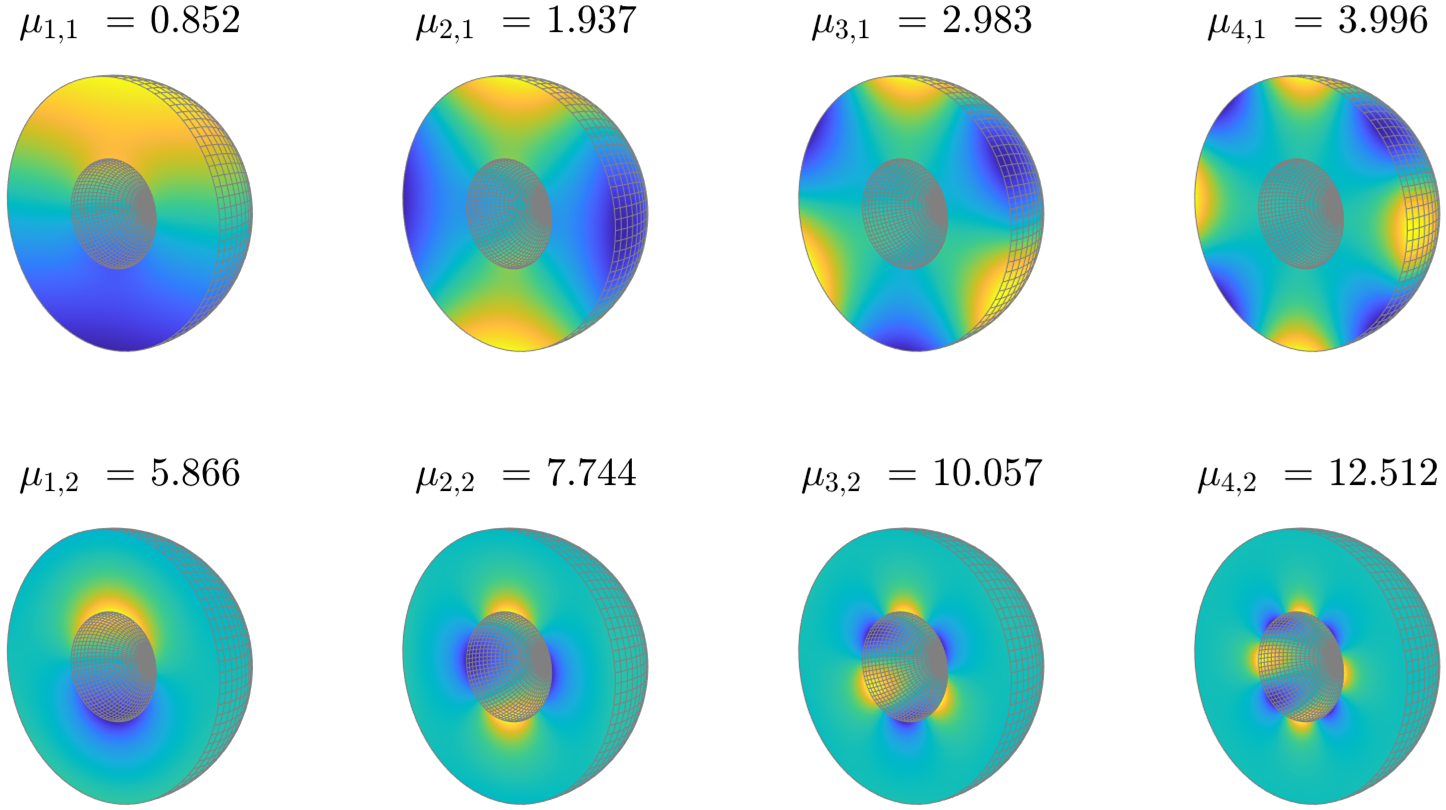}
\caption{Steklov eigenfunctions of the annular domain $\mathbb{A}^3_{0.4,1}$. }
\label{fig: dim3AnnEigFun}
\end{figure}

\end{example}

For both spherical and annular domains, spherical harmonics appear as a factor in the expressions for the Steklov eigenfunctions; and so, play an important role in the development that follows.  For this reason, we briefly recount some useful properties of spherical harmonics and refer readers to \cite[chapters 2 and 3]{atkinson2012spherical} for a detailed exposition. The vector space of  \textit{degree $l$ spherical harmonics $\mathbb{Y}_l^d$} is defined by restricting to the unit sphere $\mathbb{S}^{d-1}$, the polynomials on $\mathbb{R}^d$ that are both  harmonic and homogeneous of degree $l$  \cite[definition 2.7]{atkinson2012spherical}.  Where a polynomial $P:\mathbb{R}^d \to \mathbb{C}$ is harmonic if it satisfies Laplace's equation $\Delta P(\vec{x}) = 0$  for all $\vec{x} \in \mathbb{R}^d$; and homogeneous of degree $l$ if it satisfies $P(t\vec{x}) = t^l P(\vec{x})$ for all $t \in \mathbb{R}, \vec{x} \in \mathbb{R}^d$.  Here and below, we let  $\{Y_l^m:\ l = 0,1,\cdots;  m = 1,\cdots,N_{l,d}\}$  denote an arbitrary orthonormal  basis for the vector space of all $d$-dimensional spherical harmonics $\bigoplus_{l=0}^{\infty} \mathbb{Y}_l^d$.  Orthonormality is defined with respect to the Hilbert space $L^2(\mathbb{S}^{d-1})$, i.e.,
\begin{equation}\label{SHOrthonormal}
\begin{Small}\text{$ 
\int_{\mathbb{S}^{d-1}} Y_{l_1}^{m_1} \overline{  Y_{l_2}^ {m_2}}\ dS \ = \ \delta_{l_1,l_2} \delta_{m_1,m_2}.
$}\end{Small}
\end{equation}

\noindent Note that the spherical harmonics  $\bigoplus_{l=0}^{\infty} \mathbb{Y}_l^d$ are complete in $L^2(\mathbb{S}^{d-1})$, \cite[Theorem 2.38]{atkinson2012spherical}; and so  any $f \in L^2(\mathbb{S}^{d-1})$ has a \textit{Fourier-Laplace expansion in spherical harmonics}, i.e.,  for some choice of coefficients $\alpha_{l,m}  \in \mathbb{C}$ we have
\begin{equation}\label{FLExpansion}
\begin{Small}\text{$ 
f \   = 
 \  \sum_{l=0}^{\infty}\sum_{m=1}^{N_{l,d}} \alpha_{l,m}   Y_l^m.
$}\end{Small}
\end{equation}
 
It follows from the definitions that the degree 0 spherical harmonics $\mathbb{Y}_0^d$ consist of complex-valued constant polynomials; and the requirement that the basis be normalized in  $L^2(\mathbb{S}^{d-1})$ implies that for any basis we have $Y_0^1 = \frac{1}{\sqrt{\omega_{d-1}}}$, where \textit{$\omega_{d-1}$ is the surface area of the unit sphere $\mathbb{S}^{d-1}$}.  This observation allows us to evaluate the integral of any spherical harmonic basis element as follows.
\begin{prop}\label{SphericalHarmonicIntegral}
For any $l = 0,1,2,\cdots$ and $1\leq m \leq N_{l,d}$ we have that
\begin{equation}\label{2SHProdExp}
\begin{Small}\text{$ 
\int_{\mathbb{S}^{d-1}} Y_l^m \ dS \ = \ 
\begin{cases}
\sqrt{\omega_{d-1}} & \text{when}\  l = 0, \\
0 & \text{otherwise.}
\end{cases}
$}\end{Small}
\end{equation}
\end{prop}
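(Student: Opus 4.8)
The plan is to recognize the integral $\int_{\mathbb{S}^{d-1}} Y_l^m \, dS$ as an $L^2(\mathbb{S}^{d-1})$ inner product against the unique constant spherical harmonic and then invoke the orthonormality relation \eqref{SHOrthonormal}. Since the only degree-zero basis element is the real, positive constant $Y_0^1 = 1/\sqrt{\omega_{d-1}}$, we may write $1 = \sqrt{\omega_{d-1}}\, Y_0^1 = \sqrt{\omega_{d-1}}\, \overline{Y_0^1}$, where the conjugation is harmless precisely because $Y_0^1$ is real.

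First I would dispatch the case $l=0$ directly. Here $N_{0,d}=1$, so the only basis element is $Y_0^1 = 1/\sqrt{\omega_{d-1}}$, and integrating this constant over the sphere of surface area $\omega_{d-1}$ gives
\begin{equation*}
\int_{\mathbb{S}^{d-1}} Y_0^1 \, dS \ = \ \frac{\omega_{d-1}}{\sqrt{\omega_{d-1}}} \ = \ \sqrt{\omega_{d-1}},
\end{equation*}
which is the first branch of \eqref{2SHProdExp}. For $l \geq 1$, I would factor the constant out of the integrand, rewriting the integral as a pairing against $\overline{Y_0^1}$ so that \eqref{SHOrthonormal} applies:
\begin{equation*}
\int_{\mathbb{S}^{d-1}} Y_l^m \, dS \ = \ \sqrt{\omega_{d-1}} \int_{\mathbb{S}^{d-1}} Y_l^m \, \overline{Y_0^1} \, dS \ = \ \sqrt{\omega_{d-1}}\, \delta_{l,0}\,\delta_{m,1} \ = \ 0,
\end{equation*}
where the middle equality is the orthonormality relation \eqref{SHOrthonormal} and the last equality uses $\delta_{l,0}=0$ for $l\geq 1$. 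This yields the second branch.

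I expect no genuine obstacle: the statement is essentially a restatement of the fact that higher-degree spherical harmonics are $L^2$-orthogonal to constants, together with the explicit normalization $Y_0^1 = 1/\sqrt{\omega_{d-1}}$ forced by \eqref{SHOrthonormal}. The only point meriting a moment's care is the complex conjugation appearing in \eqref{SHOrthonormal}; I would simply observe that $Y_0^1$ is a real constant, so $\overline{Y_0^1} = Y_0^1$, and the conjugate may be inserted or removed freely.
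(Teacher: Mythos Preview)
Your proposal is correct and follows essentially the same approach as the paper: both compute the $l=0$ case directly from $Y_0^1 = 1/\sqrt{\omega_{d-1}}$, and for $l\geq 1$ both insert $1 = \sqrt{\omega_{d-1}}\,Y_0^1$ and invoke the orthonormality relation \eqref{SHOrthonormal}. Your explicit remark that $Y_0^1$ is real (so the conjugate in \eqref{SHOrthonormal} is harmless) is a small clarification the paper omits but tacitly uses.
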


\begin{proof}
If $l = 0$ then 
\begin{equation*}
\begin{Small}\text{$ 
\int_{\mathbb{S}^{d-1}} Y_0^1 \ dS \ = \  \int_{\mathbb{S}^{d-1}}  \frac{1}{\sqrt{\omega_{d-1}}} \ dS \ = \   \frac{\omega_{d-1}}{\sqrt{\omega_{d-1}}} \  = \ \sqrt{\omega_{d-1}},
$}\end{Small}
\end{equation*}

\noindent while if $l > 0$  we have  by orthonormality, see (\ref{SHOrthonormal}), that for any $m = 1,\cdots,N_{l,d}$ 
\begin{equation*} 
\begin{Small}\text{$ 
\int_{\mathbb{S}^{d-1}} Y_l^m  \ dS \ = \  \sqrt{\omega_{d-1}}\int_{\mathbb{S}^{d-1}} Y_0^1  Y_l^m  \ dS \ = \   0.
$}\end{Small}
\end{equation*}
\end{proof}

\noindent  It also follows from the definition that the product of any two spherical harmonics of degree $l$ is a homogeneous polynomial of degree $2l$; and so, in particular, we have the following expansion result.
\begin{prop}\label{2SphericalHarmonicExpansion}
The product of any two spherical harmonics in $\mathbb{Y}_l^d$ admits an expansion in spherical harmonics of even degree bounded above by $2l$, i.e., for all $1\leq m_1,m_2\leq N_{l,d}$ we have
\begin{equation}\label{2SHProdExp}
\begin{Small}\text{$ 
 Y_l^{m_1} \overline{  Y_l^ {m_2}} \ = \ \sum_{\substack{l^{\prime}=0 \\ l^{\prime} \ \text{even}}}^{2l}\sum_{m^{\prime}=1}^{N_{l^{\prime},d}}  c_{l^{\prime},m^{\prime}} Y_{l^{\prime}}^{m^{\prime}}
$}\end{Small}
\end{equation}
\noindent for some choice of coefficients $c_{l^{\prime}, m^{\prime}} \in \mathbb{C}$.
\end{prop}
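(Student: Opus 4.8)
The plan is to realize the product as the restriction of a homogeneous polynomial and then apply the classical harmonic (Gauss) decomposition. First I would observe that $\overline{Y_l^{m_2}}$ is itself a degree-$l$ spherical harmonic: conjugating the (complex) coefficients of a harmonic homogeneous polynomial of degree $l$ leaves it harmonic and homogeneous of the same degree, since the variables $\vec{x}$ are real. Hence, as already remarked in the text preceding the statement, the product $Y_l^{m_1}\overline{Y_l^{m_2}}$ is the restriction to $\mathbb{S}^{d-1}$ of a homogeneous polynomial $P$ of degree $2l$ on $\mathbb{R}^d$, which is in general not harmonic.

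The key step is to invoke the standard structure theorem (see \cite[chapter 2]{atkinson2012spherical}) that every homogeneous polynomial of degree $n$ on $\mathbb{R}^d$ decomposes as
\[
P(\vec{x}) \ = \ \sum_{j=0}^{\lfloor n/2 \rfloor} |\vec{x}|^{2j}\, H_{n-2j}(\vec{x}),
\]
where each $H_{n-2j}$ is a homogeneous harmonic polynomial of degree $n-2j$. Taking $n = 2l$ and restricting to the unit sphere, where $|\vec{x}|^2 = 1$, the powers of $|\vec{x}|^2$ collapse and we obtain
\[
Y_l^{m_1}\overline{Y_l^{m_2}} \ = \ \sum_{j=0}^{l} H_{2l-2j}\big|_{\mathbb{S}^{d-1}},
\]
a sum of spherical harmonics whose degrees $2l,\,2l-2,\,\ldots,\,2,\,0$ are exactly the even integers in $[0,2l]$. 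Expanding each summand $H_{2l-2j}|_{\mathbb{S}^{d-1}} \in \mathbb{Y}_{2l-2j}^d$ in the orthonormal basis $\{Y_{2l-2j}^{m'}\}$ then supplies the coefficients $c_{l',m'}$ and yields the asserted expansion.

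The point most deserving of care is the even-degree restriction; the remaining work is simply citing the decomposition above correctly. As a conceptual check that only even degrees can occur, I would note the antipodal symmetry: any degree-$l'$ spherical harmonic satisfies $Y_{l'}^{m'}(-\vec{x}) = (-1)^{l'}Y_{l'}^{m'}(\vec{x})$, so the product picks up the factor $(-1)^l(-1)^l = 1$ and is therefore invariant under $\vec{x}\mapsto -\vec{x}$. Since odd-degree harmonics are anti-invariant under this reflection, their coefficients in the Fourier-Laplace expansion (\ref{FLExpansion}) must vanish, independently confirming that only even $l'$ survive. Because the entire argument is algebraic and makes no use of the value of $d$, it holds uniformly for all $d \geq 2$.
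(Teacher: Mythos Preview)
Your argument is correct and is essentially the same as the paper's: the paper simply cites \cite[Theorem 2.18 and Corollary 2.19]{atkinson2012spherical}, which is precisely the harmonic (Gauss) decomposition of a homogeneous polynomial that you invoke. Your write-up supplies the details behind that citation, and the parity check via antipodal symmetry is a nice consistency check but not logically needed once the decomposition is in hand.
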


\begin{proof}
See  \cite[Theorem 2.18 and Corollary 2.19]{atkinson2012spherical}.
\end{proof}

\noindent  We also make use of a special case of the Addition theorem, which  in three dimensions is  known as Unsöld's Theorem, and in the general case takes the following form.

\begin{prop}\label{AdditionTheoremSpecialCase}
Given any $l = 0,1,2,\cdots$ and any point $\vec{\theta} \in \mathbb{S}^{d-1}$ we have that 
\begin{equation}\label{AddTheoremSpclCase}
\begin{Small}\text{$ 
 \sum_{m = 1}^{N_{l,d}}  Y_l^m(\vec{\theta}) \overline{  Y_l^ m}(\vec{\theta}) \ = \ \frac{N_{l,d}}{\omega_{d-1}}.
$}\end{Small}
\end{equation}
\end{prop}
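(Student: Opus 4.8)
The plan is to show that the function $F_l(\vec{\theta}) := \sum_{m=1}^{N_{l,d}} Y_l^m(\vec{\theta})\overline{Y_l^m}(\vec{\theta})$ is constant on $\mathbb{S}^{d-1}$, and then to pin down its value by a single integration. The quickest route is simply to invoke the full addition theorem from \cite{atkinson2012spherical}, which states that for an arbitrary orthonormal basis $\{Y_l^m\}$ of $\mathbb{Y}_l^d$ and any two points $\vec{\theta},\vec{\phi}\in\mathbb{S}^{d-1}$ one has $\sum_{m=1}^{N_{l,d}} Y_l^m(\vec{\theta})\overline{Y_l^m}(\vec{\phi}) = \frac{N_{l,d}}{\omega_{d-1}} P_{l,d}(\vec{\theta}\cdot\vec{\phi})$, where $P_{l,d}$ is the (Gegenbauer/Legendre) zonal polynomial normalized so that $P_{l,d}(1)=1$. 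Setting $\vec{\phi}=\vec{\theta}$ forces $\vec{\theta}\cdot\vec{\theta}=1$, so $P_{l,d}(1)=1$ and the right-hand side collapses to $\frac{N_{l,d}}{\omega_{d-1}}$, which is exactly (\ref{AddTheoremSpclCase}).

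Since the statement is advertised only as a special case, I would prefer a self-contained argument that avoids the machinery of zonal polynomials and uses only facts already on the table. First I would observe that $F_l$ does not depend on the chosen orthonormal basis of $\mathbb{Y}_l^d$: if $\{Z_l^m\}$ is another such basis, then $Z_l^m=\sum_k U_{mk}Y_l^k$ for a unitary matrix $U$, and a short computation gives $\sum_m Z_l^m(\vec{\theta})\overline{Z_l^m}(\vec{\theta}) = \sum_{j,k}\Bigl(\sum_m U_{mk}\overline{U_{mj}}\Bigr)Y_l^k(\vec{\theta})\overline{Y_l^j}(\vec{\theta}) = \sum_{j,k}\delta_{jk}\,Y_l^k(\vec{\theta})\overline{Y_l^j}(\vec{\theta}) = F_l(\vec{\theta})$, using $(U^*U)_{jk}=\delta_{jk}$. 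Next I would exploit rotational symmetry: for any $R\in O(d)$, composition with $R$ maps $\mathbb{Y}_l^d$ to itself (rotating a harmonic homogeneous polynomial preserves both properties and the degree) and preserves the inner product (\ref{SHOrthonormal}) since rotations preserve surface measure, so $\{Y_l^m\circ R\}$ is again an orthonormal basis. Basis independence then yields $F_l(R\vec{\theta}) = \sum_m |(Y_l^m\circ R)(\vec{\theta})|^2 = F_l(\vec{\theta})$. As $O(d)$ acts transitively on $\mathbb{S}^{d-1}$, this shows $F_l\equiv C$ for some constant $C$.

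To evaluate $C$, I would integrate over the sphere and swap sum and integral: $C\,\omega_{d-1} = \int_{\mathbb{S}^{d-1}} F_l\,dS = \sum_{m=1}^{N_{l,d}}\int_{\mathbb{S}^{d-1}} Y_l^m\,\overline{Y_l^m}\,dS = \sum_{m=1}^{N_{l,d}} 1 = N_{l,d}$, where the middle equality is orthonormality (\ref{SHOrthonormal}) with $l_1=l_2=l$, $m_1=m_2=m$. Dividing by $\omega_{d-1}$ gives $C = N_{l,d}/\omega_{d-1}$, as claimed. The only genuinely load-bearing step is the rotation-invariance lemma, and within it the fact that $\mathbb{Y}_l^d$ is an $O(d)$-invariant subspace; the rest is bookkeeping with the unitary change-of-basis matrix and a routine application of orthonormality. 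The main obstacle, then, is verifying cleanly that composing with a rotation sends one orthonormal basis of $\mathbb{Y}_l^d$ to another, which hinges on invariance of both the space and the $L^2(\mathbb{S}^{d-1})$ measure under $O(d)$.
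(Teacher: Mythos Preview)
Your proposal is correct. The paper itself does not give a proof of this proposition at all; it simply refers the reader to \cite[formula 2.35]{atkinson2012spherical}. Your first route (invoke the full addition theorem and specialize to $\vec{\phi}=\vec{\theta}$) is exactly in the spirit of that citation, since formula 2.35 in Atkinson--Han is this special case of the addition theorem. Your second, self-contained argument via basis independence, $O(d)$-invariance of $\mathbb{Y}_l^d$, transitivity of the $O(d)$-action on $\mathbb{S}^{d-1}$, and then integrating against orthonormality, is a genuine addition over what the paper provides: it gives a clean proof requiring nothing beyond the definitions already recalled in Section~\ref{SphericalHarmonicExamples}, whereas the paper simply outsources the result. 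The only step to state carefully is that rotations preserve harmonicity, homogeneity of degree~$l$, and the $L^2(\mathbb{S}^{d-1})$ inner product, which you have correctly identified as the load-bearing point.
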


\begin{proof}
See  \cite[formula 2.35]{atkinson2012spherical}.
\end{proof}

In addition to appearing as Steklov eigenvalues of the unit ball $\mathbb{B}^d$,  the spherical harmonics also appear as eigenfunctions of the surface Laplacian $\Delta_{\tau}$ on the unit sphere $\mathbb{S}^{d-1}$, (see  \cite[section 3.3]{atkinson2012spherical}).  So, for any $l = 0,1,\cdots$ and any $m = 1,\cdots, N_{l,d}$, we have that the spherical harmonic $Y_ l^m \in \mathbb{Y}^d_l$ satisfies
\begin{equation} \label{SurfLaplaceEig} 
\begin{Small}\text{$ 
\Delta_{\tau} Y_l^m = - l(l+d-2) Y_l^m.
$}\end{Small}
\end{equation}

\noindent  We also have the following version of Greens Theorem on the unit sphere $\mathbb{S}^{d-1}$  (see  \cite[Proposition 3.3]{atkinson2012spherical}) .
\begin{prop}\label{GreenBeltramiIden}
(Green-Beltrami Identity) Given smooth functions $f :\mathbb{S}^{d-1} \to \mathbb{C}$ and  $g: \mathbb{S}^{d-1} \to \mathbb{C}$ we have
\begin{equation*}
\begin{Small}\text{$ 
\int_{\mathbb{S}^{d-1}} g \Delta_{\tau}f \ dS= -\int_{\mathbb{S}^{d-1}} \nabla_{\tau} g \cdot \nabla_{\tau}f dS.
$}\end{Small}
\end{equation*}
\end{prop}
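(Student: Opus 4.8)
The statement is precisely Green's first identity on the closed Riemannian manifold $\mathbb{S}^{d-1}$, so my plan is to deduce it from the surface divergence theorem. First I would recall that the Laplace--Beltrami operator factors as $\Delta_{\tau} = \mathrm{div}_{\tau}\,\nabla_{\tau}$, where $\mathrm{div}_{\tau}$ denotes the intrinsic (surface) divergence of a tangent vector field. Applying the Leibniz rule for the surface divergence to the tangent field $g\,\nabla_{\tau} f$ yields the pointwise identity
\[
\mathrm{div}_{\tau}\!\left(g\,\nabla_{\tau} f\right) \;=\; \nabla_{\tau} g \cdot \nabla_{\tau} f \;+\; g\,\Delta_{\tau} f
\]
on $\mathbb{S}^{d-1}$. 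Since the pairing in the statement carries no conjugation, it is the $\mathbb{C}$-bilinear one, and the identity holds verbatim for complex-valued $f,g$ by extending the real case to real and imaginary parts.

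Next I would integrate this identity over $\mathbb{S}^{d-1}$. The crux is that the integral of the left-hand side vanishes,
\[
\int_{\mathbb{S}^{d-1}} \mathrm{div}_{\tau}\!\left(g\,\nabla_{\tau} f\right)\,dS \;=\; 0 ,
\]
which is the divergence theorem on the sphere. Because $\mathbb{S}^{d-1}$ is compact with empty boundary, $\int_{\mathbb{S}^{d-1}} \mathrm{div}_{\tau} X\,dS = 0$ for every smooth tangent field $X$: contracting $X$ into the Riemannian volume element produces a $(d-2)$-form whose exterior derivative equals $\left(\mathrm{div}_{\tau} X\right)\,dS$, so Stokes' theorem gives zero since $\partial \mathbb{S}^{d-1} = \emptyset$. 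Combining the two displays and rearranging produces the claimed identity.

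I expect the only genuine obstacle to be the rigorous justification of this vanishing divergence integral, i.e., the Gauss--Green theorem on a boundaryless manifold. If one prefers to avoid intrinsic differential geometry and stay within $\mathbb{R}^d$, an alternative is to extend $f$ and $g$ to a thin spherical shell so that they are constant along rays, write the ambient gradient as $\nabla = \hat{r}\,\partial_r + r^{-1}\nabla_{\tau}$ so that all radial derivatives drop out, and apply the classical Euclidean Green's identity on the shell; the boundary contributions on the inner and outer spheres are purely radial and cancel against the vanishing radial derivatives, leaving exactly the stated spherical identity. Either route reduces the problem to the standard divergence theorem, and the final algebraic rearrangement that fixes the sign is routine.
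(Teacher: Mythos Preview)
Your argument is correct: the Green--Beltrami identity is Green's first identity on the closed manifold $\mathbb{S}^{d-1}$, and your derivation via the product rule $\mathrm{div}_{\tau}(g\,\nabla_{\tau}f)=\nabla_{\tau}g\cdot\nabla_{\tau}f+g\,\Delta_{\tau}f$ together with the vanishing of $\int_{\mathbb{S}^{d-1}}\mathrm{div}_{\tau}X\,dS$ on a compact boundaryless manifold is the standard route. The extension to complex-valued $f,g$ by linearity is unproblematic, and your alternative shell argument in $\mathbb{R}^d$ is also sound.

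The paper, however, does not prove this proposition at all; it simply cites \cite[Proposition 3.3]{atkinson2012spherical}. So there is no approach to compare against---you have supplied a self-contained proof where the paper defers to a reference.
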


\noindent   Combining the eigenvalue Formula (\ref{SurfLaplaceEig}) with Proposition \ref{GreenBeltramiIden}, we prove the following triple product integral identity for spherical harmonics, (see \cite{ongTripleProductIntegralIdentity}).  A result which plays an important role in Section \ref{dDformula}.

\begin{prop} \label{TripleProductIntIden}
(Triple Product Integral Identity) Given any spherical harmonic basis element of degree $l$ and any pair of spherical harmonic basis elements of degree $n$ we have
\begin{equation}
\begin{Small}\text{$ 
\int_{\mathbb{S}^{d-1}}  Y_l^m \nabla_{\tau} Y_n^i  \cdot \nabla_{\tau}\overline{  Y_n^j}   dS=  \left( n(n+d-2) - \frac{l(l+d-2)}{2} \right) \int_{\mathbb{S}^{d-1}} Y_l^m Y_n^i\overline{  Y_n^j}  dS.
$}\end{Small}
\end{equation}
\end{prop}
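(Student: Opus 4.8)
The plan is to extract the gradient dot product $\nabla_\tau Y_n^i\cdot\nabla_\tau\overline{Y_n^j}$ from the Leibniz rule for the Laplace--Beltrami operator $\Delta_\tau$. Writing $f=Y_n^i$ and $g=\overline{Y_n^j}$, the pointwise product rule reads
\begin{equation*}
\Delta_\tau(fg)=f\,\Delta_\tau g+g\,\Delta_\tau f+2\,\nabla_\tau f\cdot\nabla_\tau g,
\end{equation*}
so that $2\,\nabla_\tau f\cdot\nabla_\tau g=\Delta_\tau(fg)-f\,\Delta_\tau g-g\,\Delta_\tau f$. Multiplying by $Y_l^m$ and integrating over $\mathbb{S}^{d-1}$, the identity to be proved becomes a statement about the single scalar $\int_{\mathbb{S}^{d-1}}Y_l^m\,\Delta_\tau(fg)\,dS$, which I would evaluate in two different ways and then equate.

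For the first evaluation I would use that $f$ and $g$ are eigenfunctions of $\Delta_\tau$. Indeed $\overline{Y_n^j}$ is itself a degree $n$ spherical harmonic (conjugation of a complex polynomial preserves both harmonicity and homogeneity), so \eqref{SurfLaplaceEig} gives $\Delta_\tau f=-n(n+d-2)f$ and $\Delta_\tau g=-n(n+d-2)g$. Substituting these into the two terms $-\int_{\mathbb{S}^{d-1}} Y_l^m f\,\Delta_\tau g\,dS$ and $-\int_{\mathbb{S}^{d-1}} Y_l^m g\,\Delta_\tau f\,dS$ turns each into $n(n+d-2)$ times the target triple product $\int_{\mathbb{S}^{d-1}}Y_l^m Y_n^i\overline{Y_n^j}\,dS$, for a combined contribution of $2n(n+d-2)$ times that product.

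For the second evaluation I would move the Laplacian off the product $fg$ and onto $Y_l^m$ using the self-adjointness of $\Delta_\tau$. Applying the Green--Beltrami identity (Proposition~\ref{GreenBeltramiIden}) in each slot in turn yields the symmetry $\int_{\mathbb{S}^{d-1}}Y_l^m\,\Delta_\tau(fg)\,dS=\int_{\mathbb{S}^{d-1}}(fg)\,\Delta_\tau Y_l^m\,dS$, and then $\Delta_\tau Y_l^m=-l(l+d-2)Y_l^m$ from \eqref{SurfLaplaceEig} turns the right-hand side into $-l(l+d-2)$ times the same triple product. Equating the two evaluations of $\int_{\mathbb{S}^{d-1}}Y_l^m\,\Delta_\tau(fg)\,dS$ and solving the resulting linear relation for $\int_{\mathbb{S}^{d-1}}Y_l^m\,\nabla_\tau f\cdot\nabla_\tau g\,dS$ produces exactly the coefficient $n(n+d-2)-\tfrac{1}{2}l(l+d-2)$, as claimed.

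The computation itself is short; the only points requiring care are the two structural facts feeding it. The first is the Leibniz rule for $\Delta_\tau$, the standard product rule for the Laplace--Beltrami operator on $\mathbb{S}^{d-1}$, which I would either cite or verify in local coordinates. The second, and the real crux, is the self-adjointness of $\Delta_\tau$: this does not follow from a single application of Green--Beltrami but from applying it with each factor in turn playing the role of the function being differentiated twice (equivalently, from the absence of a boundary on the closed manifold $\mathbb{S}^{d-1}$), and it is precisely what allows the degree $l$ eigenvalue to be transferred onto $Y_l^m$. Once these two facts are in place, the identity follows immediately.
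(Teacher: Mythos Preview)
Your proof is correct. Both approaches rest on the same two ingredients—the eigenvalue relation \eqref{SurfLaplaceEig} and the Green--Beltrami identity—but you organize them differently. The paper works only with the first-order product rule for $\nabla_\tau$ and single applications of Green--Beltrami, first deriving the intermediate symmetry $\int_{\mathbb{S}^{d-1}} Y_n^i\,\nabla_\tau\overline{Y_n^j}\cdot\nabla_\tau Y_l^m\,dS=\int_{\mathbb{S}^{d-1}} \overline{Y_n^j}\,\nabla_\tau Y_n^i\cdot\nabla_\tau Y_l^m\,dS$ and then using it to evaluate that cross term as $\tfrac{l(l+d-2)}{2}$ times the triple product. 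You instead invoke the Leibniz rule for $\Delta_\tau$ on the product $Y_n^i\overline{Y_n^j}$ together with the self-adjointness of $\Delta_\tau$ (two applications of Green--Beltrami), which lets you read off the coefficient in a single line. Your route is shorter and conceptually cleaner; the paper's route has the minor advantage of staying strictly within the tools explicitly recorded in the section (Proposition~\ref{GreenBeltramiIden} and the gradient product rule), without bringing in the second-order Leibniz formula as an additional input.
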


\begin{proof} Applying Formula (\ref{SurfLaplaceEig}), Proposition \ref{GreenBeltramiIden}, and the product rule for surface gradients, we have
\begin{align*}
\begin{Small}\text{$
 \int_{\mathbb{S}^{d-1}} Y_l^m \nabla_{\tau}   Y_n^i \cdot  \nabla_{\tau}\overline{  Y_n^j}dS
$}\end{Small} 
&=
\begin{Small}\text{$
 \int_{\mathbb{S}^{d-1}}\nabla_{\tau}  Y_n^i \cdot \nabla_{\tau}\left( \overline{  Y_n^j}Y_l^m\right)  dS -  \int_{\mathbb{S}^{d-1}} \overline{  Y_n^j}\nabla_{\tau}Y_n^i \cdot \nabla_{\tau}Y_l^m  dS
$}\end{Small}\\
 &= 
\begin{Small}
\text{$-\int_{\mathbb{S}^{d-1}}\overline{  Y_n^j}Y_l^m\Delta_{\tau}  Y_n^i \  dS -  \int_{\mathbb{S}^{d-1}}  \overline{  Y_n^j}\nabla_{\tau}Y_n^i \cdot \nabla_{\tau}Y_l^m  dS $}\end{Small}\\
  &=\begin{Small}
\text{$ n(n+d-2)\int_{\mathbb{S}^{d-1}}   Y_l^m  Y_n^i \overline{  Y_n^j}  dS -  \int_{\mathbb{S}^{d-1}} \overline{  Y_n^j} \nabla_{\tau}Y_n^i \cdot \nabla_{\tau}Y_l^m  dS $}\end{Small}\\
  &= \begin{Small}
\text{$n(n+d-2)\int_{\mathbb{S}^{d-1}}   Y_l^m Y_n^i\overline{  Y_n^j}    dS -  \int_{\mathbb{S}^{d-1}} Y_n^i \nabla_{\tau}\overline{  Y_n^j} \cdot \nabla_{\tau}Y_l^m dS.$}\end{Small}
 \end{align*}

\noindent Where the last equality follows from the fact that the previous three equalities are symmetric in $Y_n^i$ and $\overline{ Y_n^j}$.  From this last equality we deduce
\begin{equation*}
\begin{Small}\text{$
\int_{\mathbb{S}^{d-1}} Y_n^i \nabla_{\tau}\overline{  Y_n^j}\cdot \nabla_{\tau}Y_l^m dS = \int_{\mathbb{S}^{d-1}} \overline{  Y_n^j}  \nabla_{\tau}Y_n^i\cdot \nabla_{\tau}Y_l^m  dS.
$}\end{Small}
\end{equation*}

\noindent  A result which we apply to obtain the last equality in the derivation of the next formula.
\begin{align*}
\begin{Small}
\text{$\int_{\mathbb{S}^{d-1}} Y_n^i  \nabla_{\tau}\overline{  Y_n^j} \cdot \nabla_{\tau}Y_l^m\  dS $}\end{Small}
&= \begin{Small}
\text{$\int_{\mathbb{S}^{d-1}}\nabla_{\tau} \left( Y_n^i \overline{  Y_n^j}\right) \cdot \nabla_{\tau}  Y_l^m  dS -  \int_{\mathbb{S}^{d-1}}\overline{  Y_n^j} \nabla_{\tau} Y_n^i\cdot \nabla_{\tau}Y_l^m  dS $}\end{Small} \\
 &= \begin{Small}
\text{$-\int_{\mathbb{S}^{d-1}} Y_n^i \overline{  Y_n^j}  \Delta_{\tau}Y_l^m  dS -  \int_{\mathbb{S}^{d-1}}\overline{  Y_n^j} \nabla_{\tau} Y_n^i  \cdot \nabla_{\tau}Y_l^m  dS $}\end{Small}\\
 &=\begin{Small}
\text{$ l(l+d-2)\int_{\mathbb{S}^{d-1}}   Y_l^m Y_n^i \overline{  Y_n^j}     dS -  \int_{\mathbb{S}^{d-1}}\overline{  Y_n^j}  \nabla_{\tau} Y_n^i \cdot \nabla_{\tau}Y_l^m dS $}\end{Small}\\
 &= \begin{Small}
\text{$l(l+d-2)\int_{\mathbb{S}^{d-1}}   Y_l^m Y_n^i\overline{  Y_n^j}      dS -  \int_{\mathbb{S}^{d-1}} Y_n^i   \nabla_{\tau}\overline{  Y_n^j}\cdot \nabla_{\tau}Y_l^m  dS.$}\end{Small} 
 \end{align*}

\noindent It follows that
\begin{equation*}
\begin{Small}\text{$
\int_{\mathbb{S}^{d-1}}Y_n^i \nabla_{\tau}\overline{  Y_n^j}\cdot \nabla_{\tau}Y_l^m  dS =  \frac{l(l+d-2)}{2}\int_{\mathbb{S}^{d-1}}  Y_l^m Y_n^i\overline{  Y_n^j}   dS.
$}\end{Small}
\end{equation*}

 \noindent Combining this result with 
\begin{equation*}
\begin{Small}\text{$
 \int_{\mathbb{S}^{d-1}} Y_l^m  \nabla_{\tau} Y_n^i \cdot \nabla_{\tau}\overline{  Y_n^j}   dS  = n(n+d-2)\int_{\mathbb{S}^{d-1}}  Y_l^m Y_n^i \overline{  Y_n^j}   dS 
 -  \int_{\mathbb{S}^{d-1}}   Y_n^i\nabla_{\tau}\overline{  Y_n^j} \cdot \nabla_{\tau}Y_l^m  dS,
$}\end{Small}
\end{equation*}
 
 \noindent we have proved the result.
\end{proof}

%%%%%%%%%%%%%%%%%
%  SECTION 3
%%%%%%%%%%%%%%%%%

\section{Mathematical Formulation}\label{MathForm}

We consider the problem of optimizing Steklov eigenvalue shape functionals among domains which are small perturbations of either a ball or an annular domain in $\mathbb{R}^d$.    In general, given a shape $\Omega$ and a deformation  field $V: \Omega \to \mathbb{R}^d$, we define for small $t \in \mathbb{R}$  the perturbed shape $\Omega_t = \{ x + tV(x) :  x \in \Omega \}$.    For a given shape functional $J$ and shape $\Omega$, the  \textit{local shape optimization problem} seeks conditions on a deformation field $V$ that guarantees $\Omega$ optimizes $J$ when restricted to the perturbation $\{\Omega_t\}$; i.e.,
\begin{equation*}
\begin{Small}\text{$
\Omega \ = \  \underset{{t \in (-\delta,\delta)}}{\text{argmax}}\  J\left(\Omega_t\right) \quad \text{or} \quad \Omega \ = \  \underset{{t \in (-\delta,\delta)}}{\text{argmin}}\  J\left(\Omega_t\right).
$}\end{Small}
\end{equation*}

\noindent  In this case we say that the pair \textit{$(\Omega, V)$ locally optimizes $J$}. In the event that for all $t \in (-\delta,\delta)$ we have $J(\Omega) > J(\Omega_t)$ or alternatively $J(\Omega) < J(\Omega_t )$, then we say  \textit{$(\Omega, V)$ locally strictly maximizes $J$} or  \textit{$(\Omega, V)$ locally striclty minimizes $J$} respectively.

  The function $J\left(\Omega,V\right)$ defined by the correspondence $t \mapsto J\left(\Omega_t\right)$  may be differentiable; and we write $dJ(\Omega,V)$ for its derivative at zero when it exists; i.e.,  
\begin{equation*}
\begin{Small}\text{$
 dJ(\Omega; V) = \underset{t\to 0}{\lim} \frac{J(\Omega_t) - J(\Omega)}{t}
$}\end{Small}
\end{equation*}

\noindent and say is $J$  \textit{locally differentiable at} $(\Omega,V)$. Clearly, by Fermat's theorem,  if $dJ(\Omega, V) \neq 0 $ then $(\Omega, V)$ cannot locally optimize $J$.  We say \textit{$(\Omega,V)$ is critical for   $J$} if  either  $dJ(\Omega; V) =0$ or $dJ(\Omega,V)$ is not defined.  In general, the determination of pairs $(\Omega, V)$   critical for $J$  is a natural way to start investigating  a  local  shape optimization problem.

Returning to the specific case  of a ball or an annular domain in $\mathbb{R}^d$, we  let $\mathbb{B}^d_{r_o}$ denote the $d$-dimensional open ball of radius $r_o$ and let $\mathbb{S}^{d-1}_{r_o}$ denote its boundary the {$(d-1)$-dimensional} sphere of radius $r_o$.  We also let $\mathbb{A}^d_{r_i,r_o}$ denote the $d$-dimensional annular domain with inner radius $r_i$ and outer radius $r_o$. 
\begin{align*}
\begin{Small}\text{$\mathbb{B}^d_{r_o}$}\end{Small} \ &=  \ \begin{Small}\text{$\{x\in \mathbb{R}^d: \Vert x \Vert < r_o\}$},\end{Small}\\
\begin{Small}\text{$\mathbb{S}^{d-1}_{r_o}$}\end{Small} \ &= \ \begin{Small}\text{$\{x \in \mathbb{R}^d : \Vert x \Vert = r_o\}$},\end{Small}\\
\begin{Small}\text{$\mathbb{A}^d_{r_i,r_o}$}\end{Small}\ &= \ \begin{Small}\text{$ \{ x \in \mathbb{R}^d : r_i < \Vert x \Vert < r_o\}$}.\end{Small}
\end{align*}
\noindent Unfortunately, when $\Omega$ is either a ball or an annular domain,  the multiplicity of the Steklov eigenvalue  $\sigma_n(\Omega)$ is greater than one and eigenvalue multiplicity is not preserved under perturbation.  As a consequence, depending on the deformation field,  the shape functionals $\sigma_n$ are not differentiable at 0, see \cite[section 2.5]{henrot2006extremum}. Therefore,  most pairs $(\Omega, V)$ are critical for $\sigma_n$ simply because $d\sigma_n(\Omega,V)$ does not exist. Despite this fact, the notion of local differentiability can still be used to investigate local optimization problems.  To explain how, we need some additional definitions.  Given a perturbation $\{\Omega_t\}$ generated by $(\Omega,V)$, we introduce the \textit{eigenvalue branch functions}  $\lambda_{0},\lambda_{1},\lambda_{2},\cdots $ defined by the rule that for each perturbation parameter $t$, we have that  $\lambda_{0}(t) \leq \lambda_{1}(t) \leq \lambda_{2}(t) \leq \cdots$ enumerate the Steklov eigenvalues of $\Omega_t$ repeated according to multiplicity, i.e. $\lambda_{k}(t) = \sigma_k(\Omega_t)$.   Next, we define the \textit{index} of an arbitrary Steklov eigenvalue $\sigma(\Omega)$  to be the smallest integer $n$ such that $\sigma(\Omega) = \sigma_n(\Omega)$.  It follows from the definitions that if $\sigma(\Omega)$ has index $n$ and multiplicity $p$ then $\lambda_n(0) = \lambda_{n+1}(0) = \cdots = \lambda_{n+p-1}(0) = \sigma$, in this case we call the functions $\lambda_n(t),\cdots,\lambda_{n+p-1}(t)$ the \textit{eigenvalue branches} of $\sigma(\Omega)$.  Note that  some or all of these eigenvalue branches may coincide.  

\begin{example}\label{E:UnitBallPetrubed}
The situation is illustrated in Figure \ref{fig: Y21} where the unit ball $\mathbb{B}^3$ is radially perturbed by the real spherical harmonic $Y_{2,1}$ according to the correspondence 
\begin{equation*}
\begin{Small}\text{$
V: \left[
\begin{matrix}
r \\
\theta\\
\end{matrix}
\right] 
\mapsto
\left[
\begin{matrix}
r \\
\theta\\
\end{matrix}
\right] +
\left[
\begin{matrix}
 rY_{2,1}(\theta))\\
0\\
\end{matrix}
\right].
$}\end{Small}
\end{equation*}

 \begin{figure}[h!]
 \centering\includegraphics[width = 3.5in]{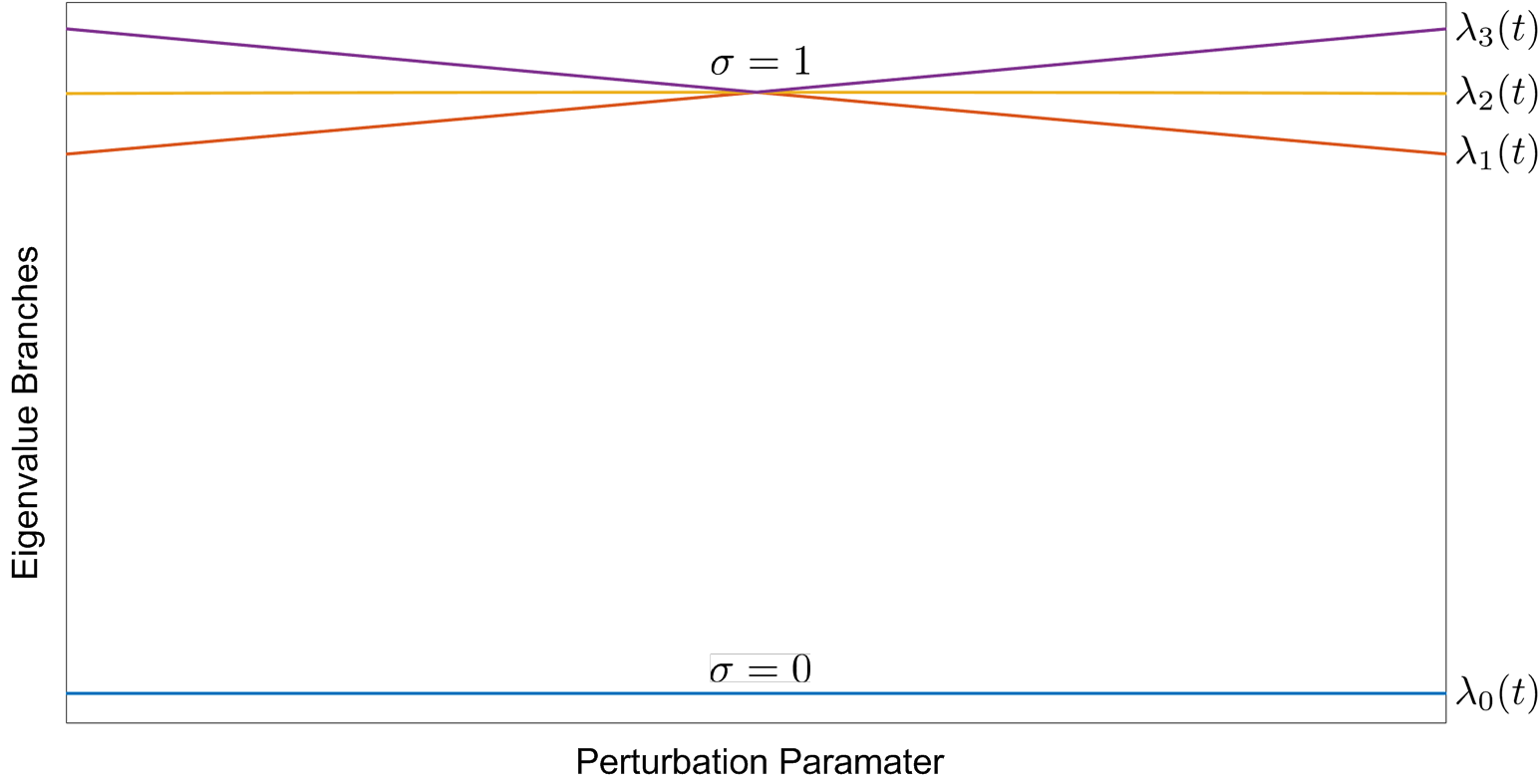}
  \caption{Eigenvalue branches of the unit ball $\mathbb{B}^3$.}
  \label{fig: Y21}
\end{figure}

In Figure \ref{fig: Y21}, $\sigma = 1$ has index 1 and  multiplicity 3 with eigenvalue branches $\lambda_1(t)$, $\lambda_2(t)$, $\lambda_3(t)$.  Also observe that for each perturbation parameter $t$ we have that  $\sigma_1(\Omega_t) = \lambda_1(t)$; and so, since  $\lambda_1(t)$ is not differentiable at 0,  we have an example where  $d\sigma_1(\mathbb{B}^3,V)$ fails to exist. 
Finally observe that there is a reordering of the branches on the left-hand side of the eigenvalue, $\sigma=1$ that will result in new branch functions $\tilde{\lambda}_k$ that are differentiable at 0.  For instance, if we let $s$ denote the permutation $1\mapsto 3, 2\mapsto 2, 3\mapsto 1$  and define for  $k=1,2,3$. 
\begin{equation*}
\begin{Small}\text{$
\tilde{\lambda}_{k}(t) =
\begin{cases}
\lambda_k (t) & t \geq 0,\\
\lambda_{s(k)}(t) & t < 0.
\end{cases}
$}\end{Small}
\end{equation*}
\noindent Then each of the functions $\tilde{\lambda}_{k}(t)$ is differentiable at 0.  Notice that  $\tilde{\lambda}_{1}^{\prime}(0)<0$ and  $\tilde{\lambda}_{3}^{\prime}(0)>0$, and so, from the fact that $\sigma_1(\Omega_t) = \lambda_1(t)$ and  $\sigma_3(\Omega_t) = \lambda_3(t)$ ,  we can infer that $(\mathbb{B}^3,V)$ locally strictly maximizes $\sigma_1$ and locally strictly minimizes $\sigma_3$.
\end{example}

 The next theorem shows that these observations are neither mistaken nor specific to this particular perturbation.  This result is a reformulation of  parts of Theorem E.1 of Dambrine-Kateb-Lamboley \cite{dambrine2014extremal}. We refer to this paper for an explanation of the notation.

\begin{theorem}\label{T:DKLE1}
\noindent Let $\Omega$ be bounded, open, and have Lipschitz boundary,  and let $V\in W^{3,\infty}(\Omega, \mathbb{R}^d)$. Suppose $\sigma$ is a Steklov eigenvalue of $\Omega$ with multiplicity $p$ and  index $n$.  Let $\left\{ u_i:\overline{\Omega} \to \mathbb{C}: i  =1,\cdots, p\right\}$, be a basis for the Steklov eigenspace of $\sigma$, orthonormalized with respect to the $L^2\left(\partial \Omega \right)$ inner product. Then we have

\noindent $\bullet$ If $\lambda_n(t),\cdots,\lambda_{n+p-1}(t)$ are the eigenvalue branches of $\sigma$, then  then there exists a permutation $s$ of ${\{n,\cdots,n+p-1\}}$ such that    for $k =n ,\cdots, n+p-1$ the shape functional
 
\begin{equation*}
\begin{Small}\text{$
\tilde{\lambda}_{k}(t) =
\begin{cases}
\lambda_{k}(t) & t \geq 0, \\
\lambda_{s(k)}(t) & t < 0,
\end{cases}
$}\end{Small}
\end{equation*}
is differentiable at 0; i.e.  $d\tilde{\lambda}_{k}(\Omega;V)$ exists.\\
\noindent $\bullet$  The derivatives $d\tilde{\lambda}_{k}(\Omega;V), \quad k=n,\cdots, n+p-1$ are equal to the eigenvalues of the $p\times p$ matrix $M = M (\Omega, V, \sigma)$ with entries defined by
\begin{equation*}
\begin{Small}\text{$
M_{i j} = \int_{\partial \Omega} \left(\nabla_{\tau}u_i
\cdot \nabla_{\tau}\overline{u_j} - \left( \sigma^2 + \sigma H\right) u_i\overline{u_j} \right)V_n\ dS.
$}\end{Small}
\end{equation*}
\noindent  Here $H$ is the additive curvature, i.e., the sum of the principal curvatures; and $V_n$ is the normal component of the deformation field $V$ on the boundary.  We refer to $M$ as an eigenvalue multiplicity perturbation matrix   or   \textit{EMP} matrix for short.
\quad\\
\end{theorem}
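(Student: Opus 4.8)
The plan is to fix the reference domain and recast the family $\{\sigma_k(\Omega_t)\}$ as the spectrum of an analytic family of self-adjoint operators, then apply Kato--Rellich perturbation theory and a Hadamard shape-derivative computation. First I would pull the Steklov problem on $\Omega_t$ back to $\Omega$ through the diffeomorphism $\Phi_t(x) = x + tV(x)$. Changing variables in the weak formulation $\int_{\Omega_t}\nabla u\cdot\nabla v\,dx = \sigma\int_{\partial\Omega_t} uv\,ds$ turns it into a generalized eigenvalue problem $a_t(\tilde u,\tilde v) = \lambda\, b_t(\tilde u,\tilde v)$ on the fixed space $H^1(\Omega)$, where $a_t(\tilde u,\tilde v) = \int_\Omega A_t\nabla\tilde u\cdot\nabla\tilde v\,dx$ with $A_t = (\det D\Phi_t)(D\Phi_t)^{-1}(D\Phi_t)^{-T}$ and $b_t(\tilde u,\tilde v) = \int_{\partial\Omega}\omega_t\,\tilde u\tilde v\,ds$ with $\omega_t$ the tangential Jacobian of $\Phi_t$ on $\partial\Omega$. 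Since $D\Phi_t = I + tDV$, both $A_t$ and $\omega_t$ are real-analytic in $t$ for small $t$, so this is an analytic family of forms; equivalently, the associated Dirichlet-to-Neumann operators $\Lambda_t$ form a real-analytic family of self-adjoint operators on $L^2(\partial\Omega)$ with compact resolvent.

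For the first bullet I would invoke Rellich's theorem on analytic families. Near $\sigma$, the total eigenprojection onto the cluster of eigenvalues bifurcating from $\sigma$ depends analytically on $t$, and there exist $p$ globally analytic eigenvalue branches $\mu_1(t),\dots,\mu_p(t)$ with $\mu_j(0)=\sigma$, passing smoothly through $t=0$. The difficulty is only bookkeeping: the monotone enumeration $\lambda_n\le\cdots\le\lambda_{n+p-1}$ follows the lower analytic branch for $t>0$ but the opposite one for $t<0$ whenever two analytic branches cross at $0$, so $\lambda_k$ generally has a corner there. Choosing $s$ to be exactly the permutation that re-sorts the left-hand branches to agree with the right-hand ordering of slopes makes each spliced function $\tilde\lambda_k$ coincide with a single analytic branch on a full neighborhood of $0$, hence differentiable, proving the first bullet.

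For the second bullet I would use first-order perturbation theory for the reduced problem. Because the eigenfunctions are $b_0$-orthonormal, the $p$ branch slopes $\mu_j'(0)$ are the eigenvalues of the Hermitian $p\times p$ matrix with entries $\dot a(u_i,u_j) - \sigma\,\dot b(u_i,u_j)$, where $\dot a$ and $\dot b$ are the $t$-derivatives at $0$ of the transported forms; differentiating the formulas above gives $\dot A = (\operatorname{div}V)\,I - DV - DV^{T}$ and $\dot\omega = \operatorname{div}_\tau V$. The remaining task is the Hadamard reduction: starting from $\int_\Omega \dot A\,\nabla u_i\cdot\nabla\overline{u_j}\,dx - \sigma\int_{\partial\Omega}(\operatorname{div}_\tau V)\,u_i\overline{u_j}\,ds$, I would integrate by parts in the volume term and use that each $u_i$ is harmonic with $\partial_n u_i = \sigma u_i$ on $\partial\Omega$ to collapse everything to a boundary integral weighted by $V_n$. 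Splitting the gradient into tangential and normal parts and invoking the surface identity $\Delta_\tau u_i = -\,\partial_{nn}u_i - \sigma H\,u_i$ (from $\Delta u_i = 0$) produces the tangential Dirichlet term $\nabla_\tau u_i\cdot\nabla_\tau\overline{u_j}$, the curvature term $\sigma H\,u_i\overline{u_j}$, and the term $\sigma^2\,u_i\overline{u_j}$, assembling precisely into the stated entry $M_{ij}$.

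The step I expect to be the main obstacle is this last shape-calculus reduction: organizing the boundary terms so that only the normal velocity $V_n$ survives and so that the additive curvature $H$ and the square $\sigma^2$ emerge with the correct coefficients requires careful use of the tangential divergence theorem together with the eigenvalue boundary condition, and it is here that the regularity $V\in W^{3,\infty}$ and the smoothness of $\partial\Omega$ enter, guaranteeing that all the traces and the curvature $H$ are well defined. Since the present theorem merely repackages Theorem E.1 of Dambrine--Kateb--Lamboley \cite{dambrine2014extremal}, I would complete the argument by matching the notation and citing that reference for the detailed verification of these boundary computations.
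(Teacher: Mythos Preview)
Your sketch is sound and matches the standard approach (pull back to the fixed domain, obtain an analytic one-parameter family of self-adjoint operators, apply Kato--Rellich to get analytic eigenvalue branches, then carry out the Hadamard reduction using harmonicity, the Steklov boundary condition, and the tangential divergence identity). You also correctly anticipate the ending: the paper does \emph{not} supply its own proof of this theorem at all---it simply states the result and refers the reader to Theorem~E.1 of Dambrine--Kateb--Lamboley \cite{dambrine2014extremal}. So there is nothing to compare; your proposal is in fact considerably more detailed than the paper's treatment, and your final sentence about matching notation and citing that reference is exactly what the paper does.
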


%%%%%%%%%%%
% Definition of critical for Sub-defferential
%%%%%%%%%%%

Based on Theorem \ref{T:DKLE1}, we  introduce the \textit{sub-differential} $\partial \sigma[\Omega,V]$ of the eigenvalue $\sigma(\Omega)$ as follows
\begin{equation*}
\begin{Small}\text{$
\partial \sigma[\Omega,V] = \left[\min_{k \in \{n,\cdots,n+p-1\}} \tilde{\lambda}_{k}^{\prime}[\Omega,V](0)\  ,\ \max_{k \in \{n,\cdots,n+p-1\}} \tilde{\lambda}_{k}^{\prime}[\Omega,V](0)\right].
$}\end{Small}
\end{equation*}

\noindent Notice that if $\sigma(\Omega)$ has index $n$ and multiplicity $p$  and $0 \notin \partial \sigma[\Omega,V]$ then $(\Omega,V)$ cannot locally optimize any of the shape functionals $\sigma_n,\cdots,\sigma_{n+p-1}$.  On the other hand if $0 \in \partial\sigma[\Omega,V]$ and $\min_{k} \tilde{\lambda}_{k}^{\prime}[\Omega,V](0) <0$ while $\max_{k} \tilde{\lambda}_{k}^{\prime}[\Omega,V](0)>0$ then as in Example \ref{E:UnitBallPetrubed}, we may conclude that $(\Omega,V)$ locally strictly maximizes $\sigma_n$ and locally strictly minimizes $\sigma_{n+p-1}$.   Based on these observations we define $(\Omega,V)$ to be \textit{critical for} $\sigma(\Omega)$ provided $0 \in \partial\sigma[\Omega,V]$.   Finally, because the trace of a matrix is the sum of its eigenvalues,  we have that if the  trace of the EMP matrix $ M (\Omega, V, \sigma)$ is zero, then we must have that $(\Omega,V)$ is critical for $\sigma(\Omega)$.   In what follows we seek a condition on $V$ which implies that the $trace[M (\Omega, V, \sigma)] =0$,  which in turn implies that $(\Omega,V)$ is critical for $\sigma(\Omega)$.

%%%%%%%%%%%%%%%%%
%  SECTION 4
%%%%%%%%%%%%%%%%%

\section{ Nearly Spherical and Nearly Annular Domains in $\mathbb{R}^d$}\label{dDformula}

   In this section we show that for nearly spherical or nearly annular perturbations, we may express the entries of an EMP matrix as a finite sum of integrals of triple products of spherical harmonics.  We first prove the result for spherical domains.

%%%%%%%%%%%
% EMP matrix of a ball
%%%%%%%%%%%

\begin{theorem}\label{T:SphericalTripleProduct}
 Given a deformation field $V \in W^{3,\infty}(\mathbb{B}^d_{r_o}, \mathbb{R}^d)$,   the entries of the EMP matrix $M = M(\mathbb{B}^d_{r_o},V,\sigma)$ for the  Steklov eigenvalue $\sigma = \frac{n}{r_o}$ of $\mathbb{B}^d_{r_o}$ are given by 
\begin{equation*}
\begin{Small}\text{$ 
M_{i,j} =  \sum_{\substack{l=0 \\ l \ \text{even}}}^{2n}\sum_{m=1}^{N_{l,d}}  \alpha_{l,m,r_o}B(l,r_o) \int_{\mathbb{S}^{d-1} }  Y_n^i \overline{  Y_n^j} Y_l^m dS  
$}\end{Small}
\end{equation*}
\noindent where
\begin{equation*}
\begin{Small}\text{$ 
B(l,r)=  -\frac{1}{r^2}\left( \frac{l(l+d-2)}{2} + n \right)
$}\end{Small} 
\end{equation*}
\noindent The $\alpha_{l,m,r_o}$ are the coefficients of the Laplace-Fourier expansion of $V_{n,r_o}$ in the orthonormal basis $\{Y_l^m: l\geq 0; m= 1,\cdots,N_{l,d}  \}$ of $d$-dimensional spherical harmonics on the unit sphere $\mathbb{S}^{d-1}$.\\
\end{theorem}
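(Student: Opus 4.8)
The plan is to substitute the explicit ball eigenfunctions from Example \ref{E:dimdBallEigs} into the EMP matrix entry formula of Theorem \ref{T:DKLE1}, convert every boundary integral over $\mathbb{S}^{d-1}_{r_o}$ into an integral over the unit sphere $\mathbb{S}^{d-1}$, and then apply the triple product integral identity (Proposition \ref{TripleProductIntIden}) to collapse the gradient term into triple products of spherical harmonics.

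First I would record the three geometric ingredients on the boundary sphere $\partial\mathbb{B}^d_{r_o}=\mathbb{S}^{d-1}_{r_o}$. The eigenfunctions of $\sigma=n/r_o$ restrict on the boundary to $u_i = r_o^{-(d-1)/2}Y_n^i$ (the radial factor $N(r,n,d)$ equals $r_o^{-(d-1)/2}$ at $r=r_o$), and this normalization is precisely what makes them orthonormal in $L^2(\mathbb{S}^{d-1}_{r_o})$. The additive curvature of a radius-$r_o$ sphere is $H=(d-1)/r_o$, so $\sigma^2+\sigma H = \tfrac{n^2}{r_o^2}+\tfrac{n(d-1)}{r_o^2} = \tfrac{n(n+d-1)}{r_o^2}$. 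Finally I track the two scalings relating the radius-$r_o$ sphere to the unit sphere: the surface measure satisfies $dS_{r_o}=r_o^{d-1}\,dS$, and for a purely angular function the surface gradient scales as $\nabla_\tau^{r_o}=r_o^{-1}\nabla_\tau$, so that the inner product $\nabla_\tau u_i\cdot\nabla_\tau\overline{u_j}$ carries a factor $r_o^{-2}$.

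Substituting these into $M_{ij}=\int_{\mathbb{S}^{d-1}_{r_o}}(\nabla_\tau u_i\cdot\nabla_\tau\overline{u_j}-(\sigma^2+\sigma H)u_i\overline{u_j})V_n\,dS_{r_o}$, the normalization factor $r_o^{-(d-1)}$ coming from $u_i\overline{u_j}$ cancels against the measure factor $r_o^{d-1}$, leaving
\[M_{ij}=\frac{1}{r_o^2}\int_{\mathbb{S}^{d-1}}\Bigl(\nabla_\tau Y_n^i\cdot\nabla_\tau\overline{Y_n^j}-n(n+d-1)Y_n^i\overline{Y_n^j}\Bigr)V_n\,dS,\]
where all gradients are now on the unit sphere. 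Expanding $V_n=\sum_{l,m}\alpha_{l,m,r_o}Y_l^m$ and applying Proposition \ref{TripleProductIntIden} to each gradient integral replaces $\int Y_l^m\nabla_\tau Y_n^i\cdot\nabla_\tau\overline{Y_n^j}\,dS$ by $\bigl(n(n+d-2)-\tfrac{l(l+d-2)}{2}\bigr)\int Y_l^m Y_n^i\overline{Y_n^j}\,dS$. The coefficient of the triple product integral then becomes $n(n+d-2)-\tfrac{l(l+d-2)}{2}-n(n+d-1)=-\bigl(n+\tfrac{l(l+d-2)}{2}\bigr)$, and dividing by $r_o^2$ yields exactly $B(l,r_o)$. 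This cancellation of the $n(n+d-1)$ mass term against the $n(n+d-2)$ from the identity is the heart of the computation.

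It remains to justify truncating the sum to even $l$ with $0\le l\le 2n$. By Proposition \ref{2SphericalHarmonicExpansion}, the product $Y_n^i\overline{Y_n^j}$ expands in spherical harmonics of even degree at most $2n$; and since spherical harmonics of distinct degrees are $L^2$-orthogonal (a consequence of \eqref{SHOrthonormal} together with the fact that the conjugate of a degree-$l$ harmonic is again degree $l$), the integral $\int_{\mathbb{S}^{d-1}}Y_n^i\overline{Y_n^j}Y_l^m\,dS$ vanishes unless $l$ is even and $l\le 2n$. Hence only those terms survive, producing the stated finite sum. The step I expect to be the main obstacle is the consistent bookkeeping of the three distinct $r_o$-scalings (boundary normalization of the eigenfunctions, surface measure, and surface gradient), since a sign or exponent error in any of them corrupts $B(l,r_o)$; the algebraic core is short once the triple product identity is in hand.
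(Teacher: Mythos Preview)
Your proposal is correct and follows essentially the same approach as the paper: substitute the boundary-normalized eigenfunctions into the EMP formula of Theorem~\ref{T:DKLE1}, rescale to the unit sphere, expand $V_{n,r_o}$ in spherical harmonics, apply the triple product identity (Proposition~\ref{TripleProductIntIden}), and then use Proposition~\ref{2SphericalHarmonicExpansion} with orthogonality to truncate the sum. The only cosmetic difference is that you compute $\sigma^2+\sigma H=n(n+d-1)/r_o^2$ before expanding $V_n$, whereas the paper first expands and then substitutes the curvature; the algebraic cancellation $n(n+d-2)-n(n+d-1)=-n$ is identical.
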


\begin{proof}  According to  Example \ref{E:dimdBallEigs} and Theorem \ref{T:DKLE1}, an entry of the EMP matrix of a spherical domain is given by
\begin{equation}\label{SphereEMPInitial}
\begin{Small}\text{$
M(B^d_{r_o},V,\sigma)_{ij} =
 N(r_o,n,d)^2 \int_{\mathbb{S}^{d-1}_{r_o}} \left( \nabla_{\tau}Y_n^i \cdot \nabla_{\tau} \overline{ Y_n^j} - (\sigma^2 + H_{r_o}\sigma) Y_n^i \overline{Y_n^j} \right) V_{n,r_o}dS ,
$}\end{Small}
\end{equation}

\noindent where we  write  $V_{n,r_o}$ for the normal component of $V$ on $\mathbb{S}^{d-1}_{r_o}$; and we write $H_{r_o}$ for the additive curvature on  $\mathbb{S}^{d-1}_{r_o}$ .  Our goal is to simplify this integral expression.
 We start by expanding $V_{n,r_o}$ into a  Fourier-Laplace series as in Formula (\ref{FLExpansion}).  Next, we substitute and  reduce the integration to the unit sphere by making use of the fact that  $\int_{\mathbb{S}^{d-1}_{r_o}} dS = r_o^{d-1}\int_{\mathbb{S}^{d-1}} dS$  together with the fact that the surface gradient $\nabla_{\tau}$ evaluated on $\mathbb{S}^{d-1}_{r_o}$  equals  $\frac{1}{r_o} \nabla_{\tau}$ evaluated on $\mathbb{S}^{d-1}$.
\begin{align*}
&\begin{Small}\text{$ 
  \int_{\mathbb{S}^{d-1}_{r_o}}  \left( \nabla_{\tau}Y_n^i \cdot \nabla_{\tau} \overline{ Y_n^j}- (\sigma^2 + H_{r_o}\sigma) Y_n^i \overline{Y_n^j} \right) V_{n,r_o}dS
$}\end{Small}\\
& \   = 
\begin{Small}\text{$ 
 \sum_{l=0}^{\infty}\sum_{m=1}^{N_{l,d}} \alpha_{l,m,r_o} r_o^{d-1}
\left( \tfrac{1}{r_o^2}\int_{\mathbb{S}^{d-1}}  \nabla_{\tau}Y_n^i \cdot \nabla_{\tau} \overline{ Y_n^j} Y_l^m  dS - \left(\sigma^2 + H_{r_o}\sigma\right)  \int_{\mathbb{S}^{d-1}}  Y_n^i \overline{Y_n^j} Y_l^m dS \right) .
$}\end{Small}
\end{align*}

\noindent  Applying Proposition  \ref{TripleProductIntIden} to the first integral we have
\begin{align*}
&\begin{Small}\text{$ 
  \int_{\mathbb{S}^{d-1}_{r_o}}  \left( \nabla_{\tau}Y_n^i \cdot \nabla_{\tau} \overline{ Y_n^j}- (\sigma^2 + H\sigma) Y_n^i \overline{Y_n^j} \right) V_{n,r_o} dS
$}\end{Small}\\
&\ =
\begin{Small}\text{$ 
 \sum_{l=0}^{\infty}\sum_{m=1}^{N_{l,d}} \alpha_{l,m,r_o} r_o^{d-1}
\left( \tfrac{1}{r_o^2}\left( n(n+d-2) - \tfrac{l(l+d-2)}{2} \right) - \left(\sigma^2 + H_{r_o}\sigma\right)  \right)  \int_{\mathbb{S}^{d-1}}  Y_n^i \overline{Y_n^j} Y_l^m dS.
$}\end{Small}
\end{align*}

\noindent The sphere $\mathbb{S}^{d-1}_{r_o}$ has  additive curvature $H_{r_o} = \frac{d-1}{r_o}$ and the Steklov eigenvalue $\sigma = \frac{n}{r_o}$.  So, substituting and simplifying,  we find that 

\begin{equation*}
\begin{Small}\text{$
\frac{1}{r_o^2}\left( n(n+d-2) - \frac{l(l+d-2)}{2} \right) - \left(\sigma^2 +H\sigma\right) =  \frac{1}{r_o^2}\left(- \frac{l(l+d-2)}{2} - n \right).
$}\end{Small}
 \end{equation*}

\noindent Also, from  Example \ref{E:dimdBallEigs},  we have $N(r_o,n,d)^2 = r_o^{-(d-1)}$, and so plugging into \ref{SphereEMPInitial} and simplifying we obtain

\begin{equation*}
\begin{Small}\text{$ 
M_{i,j} = \sum_{l=0}^{\infty}\sum_{m=1}^{N_{l,d}}  \alpha_{l,m,r_o} \left[\frac{1}{r_o^2}\left(- \frac{l(l+d-2)}{2} - n \right)\right]\int_{\mathbb{S}^{d-1} }  Y_n^i \overline{  Y_n^j} Y_l^m dS .
$}\end{Small}
\end{equation*}

\noindent To show the sum is finite, we apply Proposition \ref{2SphericalHarmonicExpansion} to the product $Y_n^i \overline{  Y_n^j}$ to obtain the following expression for the triple product integral 

\begin{equation*}
\begin{Small}\text{$ 
\int_{\mathbb{S}^{d-1} }  Y_n^i \overline{  Y_n^j} Y_l^m dS  =  \sum_{\substack{l^{\prime}=0 \\ l^{\prime} \ \text{even}}}^{2n}\sum_{m^{\prime}=1}^{N_{l^{\prime},d}}  c_{l^{\prime},m^{\prime}} \int_{\mathbb{S}^{d-1} }  Y_{l^{\prime}}^{m^{\prime}} Y_l^m dS.
$}\end{Small}
\end{equation*}

\noindent It follows by orthonormality, see Formula  (\ref{SHOrthonormal}) , that the integral is guaranteed to be 0 when $l$ is odd and also for all $l > 2n$.  The result follows.
 
\end{proof}

%%%%%%%%%%%
% EMP matrix of an annular domain
%%%%%%%%%%%

Next, we prove the corresponding result for annular domains.

\begin{theorem}\label{T:AnnularTripleProduct}
 Given a deformation field $V \in W^{3,\infty}(\mathbb{A}^d_{r_ir_o}, \mathbb{R}^d)$,   the entries of the EMP matrix $M = M(\mathbb{A}^d_{r_ir_o},V,\mu_{n,k})$ for the Steklov eigenvalue $\mu_{n,k}$ of $ \mathbb{A}^d_{r_ir_o}$ are given by 
\begin{align*}
\begin{Small}\text{$ 
M_{i,j}$}\end{Small}
 \ =  \quad  
& \begin{Small}
\text{$  \sum_{\substack{l=0 \\ l \ \text{even}}}^{2n}\sum_{m=1}^{N_{l,d}}  \alpha_{l,m,r_o}A(r_o,l,\mu_{n,k} ,d) \int_{\mathbb{S}^{d-1} } Y_n^i \overline{  Y_n^j} Y_l^m  dS 
$}\end{Small}\\
 \ - \  &\begin{Small}
\text{$ \sum_{\substack{l=0 \\ l \ \text{even}}}^{2n}\sum_{m=1}^{N_{l,d}}  \alpha_{l,m,r_i}A(r_i,l,\mu_{n,k} ,d) \int_{\mathbb{S}^{d-1} }  Y_n^i \overline{  Y_n^j} Y_l^m dS
$}\end{Small}
\end{align*}
\noindent where
\begin{align*}
\begin{Small}\text{$
A(r_o,l,\mu_{n,k} ,d) =  N(r_o,\mu_{n,k} ,d)^2 \ r_o^{d-3}\left( n(n+d-2) - \frac{l(l+d-2)}{2} - r_o^2\mu^2_{n,k} -  (d-1)r_o\mu_{n,k}  \right)
$}\end{Small}
\end{align*}
\noindent and
\begin{align*}
\begin{Small}\text{$
A(r_i,l,\mu_{n,k} ,d) =  N(r_i,\mu_{n,k} ,d)^2 \ r_i^{d-3}\left( n(n+d-2) - \frac{l(l+d-2)}{2} - r_i^2\mu^2_{n,k} +  (d-1)r_i\mu_{n,k} \right) 
$}\end{Small}
\end{align*}
\noindent The  $\alpha_{l,m,r_o}$ (respectively the $\alpha_{l,m,r_i}$ ) are the coefficients of the Laplace-Fourier expansion  of $V_{n,r_o}$  (respectively $V_{n,r_i}$)  in the orthonormal basis $\{Y_l^m: l\geq 0; m = 1,\cdots,N_{l,d} \}$ of $d$-dimensional spherical harmonics on the unit sphere $\mathbb{S}^{d-1}$.  The radial normalization coefficient $N(r,\mu_{n,k},d)$ is defined in Appendix \ref{AnnEigFormulas}.\\
\end{theorem}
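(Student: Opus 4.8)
The plan is to follow the proof of Theorem \ref{T:SphericalTripleProduct} essentially verbatim, the only structural novelty being that the boundary $\partial\mathbb{A}^d_{r_i,r_o}$ now has two components, the outer sphere $\mathbb{S}^{d-1}_{r_o}$ and the inner sphere $\mathbb{S}^{d-1}_{r_i}$. I would begin by invoking Theorem \ref{T:DKLE1} and splitting the EMP matrix entry into an integral over the outer sphere plus an integral over the inner sphere. On each component I substitute the annular eigenfunctions, which by Example \ref{E:ExampleAnnEigs} have the form $N(r,\mu_{n,k},d)\,Y_n^m$; since the radial factor $N(r,\mu_{n,k},d)$ is constant on each sphere it pulls out of both the surface gradient and the pointwise product, contributing a factor $N(r,\mu_{n,k},d)^2$ and otherwise not interacting with the angular integrals. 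Unlike the spherical case, $N(r,\mu_{n,k},d)^2$ does not collapse to a pure power of $r$, so it is simply carried along.

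On each sphere I then reduce to the unit sphere via $\int_{\mathbb{S}^{d-1}_r}dS = r^{d-1}\int_{\mathbb{S}^{d-1}}dS$ and the scaling of the surface gradient by $1/r$, expand the relevant normal component of $V$ in a Fourier-Laplace series (Formula (\ref{FLExpansion})), and apply the Triple Product Integral Identity (Proposition \ref{TripleProductIntIden}) to the gradient term. Collecting the resulting factor $r^{d-1}N^2\cdot\frac{1}{r^2}=r^{d-3}N^2$ produces on each sphere the coefficient $N(r,\mu_{n,k},d)^2\,r^{d-3}\bigl(n(n+d-2)-\frac{l(l+d-2)}{2}-r^2(\mu_{n,k}^2+\mu_{n,k}H)\bigr)$ multiplying the triple product $\int_{\mathbb{S}^{d-1}}Y_n^i\overline{Y_n^j}Y_l^m\,dS$.

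The one genuinely new ingredient, and the step I expect to require the most care, is the sign bookkeeping at the inner boundary, where two distinct sign effects must be kept separate. First, the additive curvature is taken relative to the domain's outward normal, which on the inner sphere points toward the origin; hence $H_{r_o}=\frac{d-1}{r_o}$ but $H_{r_i}=-\frac{d-1}{r_i}$. Substituting these gives $r_o^2(\mu_{n,k}^2+\mu_{n,k}H_{r_o})=r_o^2\mu_{n,k}^2+(d-1)r_o\mu_{n,k}$ and $r_i^2(\mu_{n,k}^2+\mu_{n,k}H_{r_i})=r_i^2\mu_{n,k}^2-(d-1)r_i\mu_{n,k}$, which is exactly the internal sign change distinguishing $A(r_i,l,\mu_{n,k},d)$ from $A(r_o,l,\mu_{n,k},d)$. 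Second, the normal component in Theorem \ref{T:DKLE1} is likewise measured against this inward-pointing outward normal, so it equals $-V_{n,r_i}$ in the outward-radial convention used to define the coefficients $\alpha_{l,m,r_i}$; this accounts for the explicit minus sign in front of the inner sum. The delicate point is to route the curvature flip into $A(r_i,\cdots)$ while routing the normal flip into the overall sign, rather than conflating the two.

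Finally, finiteness of each of the two sums follows exactly as in the spherical proof: Proposition \ref{2SphericalHarmonicExpansion} expands $Y_n^i\overline{Y_n^j}$ into spherical harmonics of even degree at most $2n$, and orthonormality (Formula (\ref{SHOrthonormal})) then forces $\int_{\mathbb{S}^{d-1}}Y_n^i\overline{Y_n^j}Y_l^m\,dS=0$ whenever $l$ is odd or $l>2n$, truncating both sums to even $l$ with $0\le l\le 2n$ and completing the proof.
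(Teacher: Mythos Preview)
Your proposal is correct and follows essentially the same approach as the paper's own proof, which simply writes out the EMP entry as an outer-sphere integral minus an inner-sphere integral, records the curvatures $H_{r_o}=\tfrac{d-1}{r_o}$ and $H_{r_i}=-\tfrac{d-1}{r_i}$, and then says the result follows by applying the derivation of Theorem~\ref{T:SphericalTripleProduct} to each term. Your account is in fact more explicit than the paper's about the two distinct sign effects at the inner boundary (curvature flip versus normal-direction flip), which the paper leaves implicit.
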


\begin{proof} According to Theorem \ref{T:DKLE1}, an entry of the EMP matrix of an annular domain is given by
\begin{align*}\label{AnnEMPInitial}
\begin{Small}\text{$ 
M_{ij} = \ N(r_o,n,k,d)^2 
$}\end{Small}
 & 
\begin{Small}\text{$ 
\int_{\mathbb{S}^{ d-1}_{r_o}} \left(\nabla_{\tau}Y_n^i \cdot \nabla_{\tau} \overline{ Y_n^j} - (\mu_{n,k}^2 + H_{r_o}\mu_{n,k}) Y_n^i \overline{Y_n^j} \right) V_{n,r_o} dS 
$}\end{Small}\\
\begin{Small}\text{$
 - \ N(r_i,n,k,d)^2 
$}\end{Small}
&
\begin{Small}\text{$
\int_{\mathbb{S}^{d-1}_{r_i}} \left( \nabla_{\tau}Y_n^i \cdot \nabla_{\tau} \overline{ Y_n^j} - (\mu_{n,k}^2 + H_{r_i}\mu_{n,k}) Y_n^i \overline{Y_n^j} \right) V_{n,r_i}dS 
$}\end{Small}. \nonumber
\end{align*}

\noindent We have that the additive curvatures on the outer and inner boundary are given by $H_{r_o} =  \frac{d-1}{r_o}$ and $H_{r_i} =  -\frac{d-1}{r_i}$.  So, the result follows immediately by applying the derivation in the proof of Theorem \ref{T:SphericalTripleProduct} to the individual terms of the EMP matrix $M$.   
\end{proof}

%%%%%%%%%%%%%%%%%
%  SECTION 5
%%%%%%%%%%%%%%%%%

%%%%%%%%%%%
% Local Optimality
%%%%%%%%%%%

\section{The Local Steklov Eigenvalue Optimization Problem}

In this section we return to the local shape optimization problem for Steklov eigenvalue functionals and provide partial solutions for spherical and annular domains.  We first consider spherical domains; and start by calculating the trace of an EMP matrix.

\begin{prop}\label{TraceCalculationSphere}
Let $V \in W^{3,\infty}(\mathbb{B}^d_{r_o}, \mathbb{R}^d)$ and $\sigma = \frac{n}{r_o}$ be a Steklov eigenvalue of $\mathbb{B}^d_{r_o}$, then we have for the EMP matrix $M = M(\mathbb{B}^d_{r_o},V,\sigma)$ 
\begin{equation}\label{TraceSphere}
\begin{Small}\text{$ 
tr\left(M\right) \ = \  - \alpha_{0,1,r_o}\ \frac{n  N_{n,d}}{r_o\sqrt{\omega_{d-1}}}.
$}\end{Small}
\end{equation}
\end{prop}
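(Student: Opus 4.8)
The plan is to compute the trace directly from the closed form for the matrix entries established in Theorem \ref{T:SphericalTripleProduct}. Setting $i = j$ and summing the diagonal gives
\[
tr(M) = \sum_{i=1}^{N_{n,d}} M_{i,i} = \sum_{\substack{l=0 \\ l \ \text{even}}}^{2n} \sum_{m=1}^{N_{l,d}} \alpha_{l,m,r_o} B(l,r_o) \int_{\mathbb{S}^{d-1}} \left( \sum_{i=1}^{N_{n,d}} Y_n^i \overline{Y_n^i} \right) Y_l^m \, dS,
\]
where, because all the summation ranges are finite, I have interchanged the order of summation and pulled the sum over $i$ inside the integral so that it acts only on the factor $Y_n^i \overline{Y_n^i}$.

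The decisive step is to recognize the inner diagonal sum. It is precisely the quantity evaluated by the special case of the Addition Theorem, Proposition \ref{AdditionTheoremSpecialCase}, which collapses $\sum_{i=1}^{N_{n,d}} Y_n^i \overline{Y_n^i}$ to the constant $N_{n,d}/\omega_{d-1}$, independent of the point on the sphere. This is the heart of the argument: although an individual product $Y_n^i \overline{Y_n^j}$ depends on the choice of basis, summing the full diagonal produces a rotationally invariant constant. Pulling this constant out of the integral leaves $\int_{\mathbb{S}^{d-1}} Y_l^m \, dS$, to which I apply Proposition \ref{SphericalHarmonicIntegral}: this integral vanishes for every $l > 0$ and equals $\sqrt{\omega_{d-1}}$ when $l = 0$ (in which case $m = 1$ is forced). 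Consequently every term in the double sum over $(l,m)$ drops out except the single surviving term with $l = 0$, $m = 1$.

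It remains only to evaluate that one term. At $l = 0$ the quadratic piece $l(l+d-2)/2$ of $B(l,r_o)$ vanishes, so $B(0,r_o)$ reduces to the $n$-contribution alone, and collecting the constants $N_{n,d}/\omega_{d-1}$ from the Addition Theorem and $\sqrt{\omega_{d-1}}$ from Proposition \ref{SphericalHarmonicIntegral} yields the closed form (\ref{TraceSphere}). I do not expect a genuine obstacle: the entire proof is an orchestration of the three spherical-harmonic identities assembled in Section \ref{SphericalHarmonicExamples}. The one point deserving care is that the collapse via the Addition Theorem depends essentially on summing over the complete index range $i = 1, \ldots, N_{n,d}$ — exactly the data the trace supplies — so that no analogous simplification is available for the individual off-diagonal entries.
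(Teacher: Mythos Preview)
Your proposal is correct and follows essentially the same route as the paper's own proof: apply Theorem~\ref{T:SphericalTripleProduct} to the diagonal entries, exchange the order of summation, invoke the Addition Theorem (Proposition~\ref{AdditionTheoremSpecialCase}) to collapse $\sum_i Y_n^i\overline{Y_n^i}$ to a constant, then use Proposition~\ref{SphericalHarmonicIntegral} to kill every term with $l>0$, and finally read off $B(0,r_o)$.
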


\begin{proof}
If first we apply the result of Theorem \ref{T:SphericalTripleProduct};  and second, we  exchange the order of summation and integration; and third, we apply Proposition \ref{AdditionTheoremSpecialCase}; and finally, we apply Proposition \ref{SphericalHarmonicIntegral}, then we obtain
\begin{align*}
\begin{Small}\text{$tr\left(M\right) $}\end{Small} \ &= \
\begin{Small}\text{$
\sum_{j=1}^{N_{n,d}}\sum_{\substack{l=0 \\ l \ \text{even}}}^{2n}\sum_{m=1}^{N_{l,d}}  \alpha_{l,m,r_o}B(l,r_o) \int_{\mathbb{S}^{2}}Y_n^j \overline{ Y_n^j} Y_{l}^m  dS
$}\end{Small}\\
&= \ \begin{Small}\text{$
\sum_{\substack{l=0 \\ l \ \text{even}}}^{2n}\sum_{m=1}^{N_{l,d}}  \alpha_{l,m,r_o}B(l,r_o) \int_{\mathbb{S}^{2}}\left(\sum_{j=1}^{N_{n,d}}Y_n^j \overline{ Y_n^j}\right) Y_{l}^m  dS
$}\end{Small}\\
&= \ \begin{Small}\text{$
 \sum_{\substack{l=0 \\ l \ \text{even}}}^{2n}\sum_{m=1}^{N_{l,d}}  \alpha_{l,m,r_o}B(l,r_o)\frac{N_{n,d}}{\omega_{d-1}}  \int_{\mathbb{S}^{2}} Y_{l}^m  dS
$}\end{Small}\\
&= \
\begin{Small}\text{$
 \alpha_{0,1,r_o}B(0,r_o) \frac{N_{n,d}}{\sqrt{\omega_{d-1}}}.
$}\end{Small}
\end{align*}
\noindent Noting that $B(0,r_o) = -\frac{n}{r_o}$, the result follows.
\end{proof}

\noindent We now state our main result for spherical domains.

\begin{theorem}\label{T:LocalOptimizationBall}
Let $V \in W^{3,\infty}(\mathbb{B}^d_{r_o}, \mathbb{R}^d)$  be  a  boundary component volume preserving at first order deformation field on $\mathbb{B}^d_{r_o}$, i.e., $\int_{\mathbb{S}^{d-1}_{r_o}} V_n dS = 0$,  and let $\sigma =  \frac{n}{r_o}$  be a  Steklov eigenvalue  of $\mathbb{B}^d_{r_o}$, then $(\mathbb{B}^d_{r_o},V)$ is critical for $\sigma$.  If, in addition,  the EMP matrix $M = M(\mathbb{B}^d_{r_o},V,\sigma)$  is not the zero matrix, then $(\mathbb{B}_{r_o}^d,V)$ locally strictly maximizes $\sigma_{ind_{\sigma}}$ and locally strictly minimizes $\sigma_{ind_{\sigma}+N(n,d)-1}$.  Where $ind_{\sigma}$  is the index  of $\sigma$, while $N(n,d)$ is the multiplicity of $\sigma$.
\end{theorem}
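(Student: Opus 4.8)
The plan is to combine the trace computation of Proposition \ref{TraceCalculationSphere} with the identification, from Theorem \ref{T:DKLE1}, of the eigenvalues of the EMP matrix $M$ as the derivatives $\tilde{\lambda}_k'(0)$ of the reordered (hence differentiable) eigenvalue branches of $\sigma$. First I would translate the first-order volume-preserving hypothesis into a statement about the Fourier--Laplace coefficient $\alpha_{0,1,r_o}$. Since $Y_0^1 = 1/\sqrt{\omega_{d-1}}$ and $\int_{\mathbb{S}^{d-1}_{r_o}} V_n\, dS = r_o^{d-1}\int_{\mathbb{S}^{d-1}} V_{n,r_o}\, dS$, the constraint $\int_{\mathbb{S}^{d-1}_{r_o}} V_n\, dS = 0$ forces $\alpha_{0,1,r_o} = \frac{1}{\sqrt{\omega_{d-1}}}\int_{\mathbb{S}^{d-1}} V_{n,r_o}\, dS = 0$. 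Feeding this into Proposition \ref{TraceCalculationSphere} gives $tr(M)=0$.

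Next I would record that $M$ is Hermitian: because $V$ is real-valued the normal trace $V_n$ is real, and conjugating the defining integral interchanges the roles of $u_i$ and $u_j$, so $\overline{M_{ij}} = M_{ji}$. Consequently $M$ has real eigenvalues, which by Theorem \ref{T:DKLE1} are exactly the derivatives $\tilde{\lambda}_k'(0)$ for $k = ind_{\sigma},\ldots, ind_{\sigma}+N(n,d)-1$. Their sum equals $tr(M)=0$, so $\min_k \tilde{\lambda}_k'(0) \le 0 \le \max_k \tilde{\lambda}_k'(0)$; that is, $0 \in \partial\sigma[\mathbb{B}^d_{r_o},V]$, and by definition $(\mathbb{B}^d_{r_o},V)$ is critical for $\sigma$. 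This establishes the first assertion.

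For the strict optimality claim I would exploit the spectral structure of a nonzero Hermitian matrix with vanishing trace. A Hermitian matrix vanishes precisely when all its eigenvalues vanish, so $M\neq 0$ yields a nonzero eigenvalue; since the eigenvalues are real and sum to zero, they cannot all share one sign, forcing $\min_k \tilde{\lambda}_k'(0) < 0 < \max_k \tilde{\lambda}_k'(0)$. I would then convert these strict sign conditions into statements about the ordered shape functionals following the mechanism of Example \ref{E:UnitBallPetrubed}. Writing $n = ind_{\sigma}$ and $p = N(n,d)$, the branches $\lambda_n,\ldots,\lambda_{n+p-1}$ of $\sigma$ coincide, as an unordered set, with the reordered branches $\tilde{\lambda}_n,\ldots,\tilde{\lambda}_{n+p-1}$ at each $t$; because these branches stay separated from the rest of the spectrum for small $t$, one has $\sigma_n(\Omega_t) = \min_k \tilde{\lambda}_k(t)$ and $\sigma_{n+p-1}(\Omega_t) = \max_k \tilde{\lambda}_k(t)$. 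Choosing $k_0$ with $\tilde{\lambda}_{k_0}'(0)<0$ and $k_1$ with $\tilde{\lambda}_{k_1}'(0)>0$, a one-sided first-order expansion at $t=0$ shows $\min_k \tilde{\lambda}_k(t) < \sigma$ for all small $t\neq 0$ (using $k_0$ on $t>0$ and $k_1$ on $t<0$) and likewise $\max_k \tilde{\lambda}_k(t) > \sigma$ for all small $t\neq 0$. Hence $(\mathbb{B}^d_{r_o},V)$ locally strictly maximizes $\sigma_{ind_{\sigma}}$ and locally strictly minimizes $\sigma_{ind_{\sigma}+N(n,d)-1}$.

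I expect the main obstacle to be this final step: rigorously justifying that the ordered functionals $\sigma_n$ and $\sigma_{n+p-1}$ equal the pointwise minimum and maximum of the differentiable reordered branches in a neighborhood of $t=0$ (which relies on the branches of $\sigma$ remaining isolated from neighboring eigenvalues for small $t$) and then tracking correctly which branch dips below $\sigma$ on each side of $t=0$. By comparison, the trace vanishing and the Hermitian-with-zero-trace argument are routine.
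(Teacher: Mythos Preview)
Your proposal is correct and follows the same overall strategy as the paper: translate the volume-preserving hypothesis into $\alpha_{0,1,r_o}=0$, invoke Proposition~\ref{TraceCalculationSphere} to get $\mathrm{tr}(M)=0$, establish that $M$ is Hermitian, and then use the nonzero-Hermitian-with-trace-zero argument to produce strictly positive and strictly negative eigenvalues of $M$.

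The one substantive difference is in how Hermiticity is obtained. You read it off directly from the defining integral in Theorem~\ref{T:DKLE1}: since $V_n$, $\sigma$, and $H$ are real, conjugating $M_{ij}$ swaps $u_i$ and $\overline{u_j}$ and yields $M_{ji}$. The paper instead passes to the standard orthonormal basis of Appendix~\ref{StandardBasisSphHar}, expands $V_{n,r_o}$ in that basis, and uses the conjugation relation of Proposition~\ref{ConjProperty} together with the triple-product formula of Theorem~\ref{T:SphericalTripleProduct} to check $\overline{M}_{j,i}=M_{ij}$ term by term. Your argument is shorter, basis-independent, and avoids the apparatus of Appendix~\ref{StandardBasisSphHar}; the paper's route, while heavier, makes the structure of $M$ in that basis explicit, which is what is later used for the numerical computations in Section~\ref{Numerics}. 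As for the final step you flag as the main obstacle, the paper does not spell this out in more detail than you do; it simply refers back to the discussion at the end of Section~\ref{MathForm}, so your treatment is at least as complete.
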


\begin{proof}
Again applying Proposition \ref{SphericalHarmonicIntegral}, we have that 
\begin{align*}
\begin{Small}\text{$
\int_{\mathbb{S}_{r_0}^{d-1}} V_n\ dS \ = \
r_o^{d-1}\sum_{l=0}^{\infty}\sum_{m=1}^{N_{l,d}}\alpha_{m,l,r_o}  \int_{\mathbb{S}^{d-1}}Y_l^m\ dS 
\ = \ 
r_o^{d-1}\sqrt{\omega_{d-1}} \ \alpha_{0,1,r_o}  
$}\end{Small}
\end{align*}

\noindent and it follows that
\begin{align*}
\begin{Small}\text{ $V$ is volume preserving at first order on $\mathbb{B}^d_{r_o}
$}\end{Small}
\quad &\Leftrightarrow \quad
\begin{Small}\text{$
 \alpha_{0,1,r_o}  = 0 
$}\end{Small}\\
 \quad &\Leftrightarrow \quad
\begin{Small}\text{$
\text{trace}(M) = 0 
$}\end{Small}\\
\quad &\Rightarrow \quad
\begin{Small}\text{$
 0 \in \partial \sigma[\mathbb{B}_{r_o}^d,V] 
$}\end{Small}
\end{align*}

\noindent  We conclude $(\mathbb{B}_{r_o}^d,V)$  is critical for $\sigma$, (see the definition at the end of Section \ref{MathForm}).

Turning to the local optimization result, we assume that the EMP matrix $M$ has been computed using the standard orthonormal basis given by \ref{dDStdBasis} in Appendix \ref{StandardBasisSphHar}. For clarity, we assume the basis of $\mathbb{Y}_n^d$ has been enumerated in some way and retain the generic notation for these spherical harmonics while using the standard basis indices for the spherical harmonics used in the expansion of $V_{n,r_o}$.  Despite the notation it should be understood that all three spherical harmonics appearing in the integral are elements of the standard orthonormal basis. With this convention we have  
\begin{align*}
\begin{Small}\text{$
\overline{M}_{j,i} 
$}\end{Small}
\ &= \ 
\begin{Small}\text{$
\sum_{\mu_1 = 0}^{\infty}\ \sum_{{\mu_1,\cdots,\mu_{d-3},l,m}} \overline{\alpha}_{\mu_1,\cdots,\mu_{d-3},l,m,r_o}B(l,r_o) \int_{\mathbb{S}^{d-1} }  \overline{Y_n^j} \ \overline{ \overline{Y_n^i}}\  \overline{Y}_{\mu_1,\cdots,\mu_{d-3},l,m} dS 
$}\end{Small}\\
&= \ 
\begin{Small}\text{$
\sum_{\mu_1 = 0}^{\infty}\ \sum_{{\mu_1,\cdots,\mu_{d-3},l,m}} \alpha_{\mu_1,\cdots,\mu_{d-3},l,-m,r_o}B(l,r_o) \int_{\mathbb{S}^{d-1} }  \overline{Y_n^j}Y_n^i {Y}_{\mu_1,\cdots,\mu_{d-3},l,-m} dS
$}\end{Small}\\
&= \ 
\begin{Small}\text{$
\sum_{\mu_1 = 0}^{\infty}\ \sum_{{\mu_1,\cdots,\mu_{d-3},l,m}} \alpha_{\mu_1,\cdots,\mu_{d-3},l,m,r_o}B(l,r_o) \int_{\mathbb{S}^{d-1} }  Y_n^i \overline{Y_n^j}  {Y}_{\mu_1,\cdots,\mu_{d-3},l,m} dS \ = \ M_{ij}
$}\end{Small}
\end{align*}

\noindent Where for the second equality we have used Proposition \ref{ConjProperty} applied to $V_{n,r_o}$; and for the final equality we have rearranged the sum with respect to the symmetric index $m$ which ranges over $-l\leq m\leq l$.  We have shown the EMP matrix is Hermitian; and furthermore, because any pair of orthonormal bases for $\mathbb{Y}_n^d$ are unitarily equivalent, it is still the case that the EMP matrix is non-zero when computed with respect to the standard basis. 
 It follows that  if all the eigenvalues of $M$ were zero, then $M$ would have to be the zero matrix, a contradiction. We conclude that the EMP matrix $M$ must have a non-zero eigenvalue.  The conclusion that $(\mathbb{B}_{r_o}^d,V)$ locally maximizes $\sigma_{ind_{\sigma}}$ and locally minimizes $\sigma_{ind_{\sigma}+N(n,d)-1}$, follows from the discussion at the end of Section \ref{MathForm} together with the fact that $\text{trace}(M) = 0$.
\end{proof}

For an annular domain we have the following formula for the trace of an EMP matrix.   

\begin{prop}\label{TraceCalculationAnnulus}
Let $V \in W^{3,\infty}(\mathbb{A}^d_{r_i,r_o}, \mathbb{R}^d)$ and let $\mu_{l,k}$ be a Steklov eigenvalue of $\mathbb{B}^d_{r_o}$, then we have for the EMP matrix $M = M(\mathbb{A}^d_{r_i,r_o},V,\sigma)$ 
\begin{equation}\label{TraceSphere}
\begin{Small}\text{$ 
tr\left(M\right) \ = \ \left[\alpha_{0,1,r_o}A(r_o,0,\mu_{n,k} ,d) - \alpha_{0,1,r_i}A(r_i,0,\mu_{n,k} ,d) \right]\frac{N_{n,d}}{\sqrt{\omega_{d-1}}} .
$}\end{Small}
\end{equation}
\end{prop}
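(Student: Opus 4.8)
The plan is to mirror the computation in the proof of Proposition \ref{TraceCalculationSphere} almost verbatim, applying it to each of the two boundary terms produced by Theorem \ref{T:AnnularTripleProduct}. The starting point is the explicit formula for the annular EMP matrix entries in that theorem, which writes $M_{i,j}$ as a difference of two finite triple-product sums: one attached to the outer sphere $\mathbb{S}^{d-1}_{r_o}$ with coefficients $\alpha_{l,m,r_o}$ and weight $A(r_o,l,\mu_{n,k},d)$, and one attached to the inner sphere $\mathbb{S}^{d-1}_{r_i}$ with coefficients $\alpha_{l,m,r_i}$ and weight $A(r_i,l,\mu_{n,k},d)$, the latter carrying a relative minus sign.

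First I would form the trace by setting $i=j$ and summing over $j=1,\dots,N_{n,d}$. Because the entry formula is linear in the triple-product integrals, the trace splits as the difference of two sums of exactly the form encountered in the spherical case. For each sum I would interchange the finite summation over $j$ with the integral over $\mathbb{S}^{d-1}$, collecting the factor $\sum_{j=1}^{N_{n,d}} Y_n^j \overline{Y_n^j}$ inside the integrand.

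Next I would apply the Addition Theorem in the form of Proposition \ref{AdditionTheoremSpecialCase} to replace this factor by the constant $N_{n,d}/\omega_{d-1}$, which then pulls out of the integral. At that point Proposition \ref{SphericalHarmonicIntegral} forces every term with $l>0$ to vanish, leaving only the $l=0$, $m=1$ summand, which contributes $\sqrt{\omega_{d-1}}$. Carrying this through each of the two boundary sums yields $\alpha_{0,1,r_o}A(r_o,0,\mu_{n,k},d)\,N_{n,d}/\sqrt{\omega_{d-1}}$ from the outer sphere and $\alpha_{0,1,r_i}A(r_i,0,\mu_{n,k},d)\,N_{n,d}/\sqrt{\omega_{d-1}}$ from the inner sphere; combining these with the relative minus sign gives the stated formula.

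I do not expect a genuine obstacle here: the argument is a direct transcription of the spherical trace computation, and the two simplifying identities (the Addition Theorem and the vanishing of $\int_{\mathbb{S}^{d-1}} Y_l^m\,dS$ for $l>0$) act identically on each sphere because both triple-product integrals are taken against the same unit-sphere basis $\{Y_l^m\}$. The only point requiring care is bookkeeping—keeping the two sets of coefficients $\alpha_{\cdot,\cdot,r_o}$ and $\alpha_{\cdot,\cdot,r_i}$ and the two weights $A(r_o,0,\mu_{n,k},d)$ and $A(r_i,0,\mu_{n,k},d)$ separate, and preserving the relative minus sign from Theorem \ref{T:AnnularTripleProduct} through to the final expression.
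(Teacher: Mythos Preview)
Your proposal is correct and follows essentially the same approach as the paper: apply Theorem \ref{T:AnnularTripleProduct} to write the trace as a difference of two sums, then separately run the argument of Proposition \ref{TraceCalculationSphere} (Addition Theorem followed by Proposition \ref{SphericalHarmonicIntegral}) on the outer and inner pieces and recombine with the relative minus sign.
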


\begin{proof}
If first we apply the result of Theorem \ref{T:AnnularTripleProduct} and then separately apply the proof of Proposition \ref{TraceCalculationSphere} to the inner and outer expressions, we obtain the result.
\begin{align*}
\begin{Small}\text{$tr\left(M\right) $}\end{Small}
 \ =  \quad  
& \begin{Small}
\text{$ \sum_{j=1}^{N_{n,d}}  \sum_{\substack{l=0 \\ l \ \text{even}}}^{2n}\sum_{m=1}^{N_{l,d}}  \alpha_{l,m,r_o}A(r_o,l,\mu_{n,k} ,d) \int_{\mathbb{S}^{d-1} } Y_n^i \overline{  Y_n^j} Y_l^m  dS 
$}\end{Small}\\
 \ - \  &\begin{Small}
\text{$ \sum_{j=1}^{N_{n,d}} \sum_{\substack{l=0 \\ l \ \text{even}}}^{2n}\sum_{m=1}^{N_{l,d}}  \alpha_{l,m,r_i}A(r_i,l,\mu_{n,k} ,d) \int_{\mathbb{S}^{d-1} }  Y_n^i \overline{  Y_n^j} Y_l^m dS
$}\end{Small}\\
 = \quad &
\begin{Small}\text{$
\alpha_{0,1,r_o}A(r_o,0,\mu_{n,k} ,d) \frac{N_{n,d}}{\sqrt{\omega_{d-1}}} -  \alpha_{0,1,r_i}A(r_i,0,\mu_{n,k} ,d)\frac{N_{n,d}}{\sqrt{\omega_{d-1}}} .
$}\end{Small}
\end{align*}
\end{proof}

\noindent We also have the corresponding local optimization result for annular domains.
\begin{theorem}\label{T:LocalOptimizationAnnulus}
Let $V \in W^{3,\infty}(\mathbb{A}^d_{r_i,r_o}, \mathbb{R}^d)$ be  a  boundary component volume preserving at first order deformation field on $\mathbb{A}^d_{r_i,r_o}$, i.e., $\int_{\mathbb{S}^{d-1}_{r_o}} V_n dS = \int_{\mathbb{S}^{d-1}_{r_i}} V_n dS = 0$,  and let $\mu_{n,k}$  be a  Steklov eigenvalue  of $\mathbb{A}^d_{r_i,r_o}$,  then $(\mathbb{A}^d_{r_i,r_o},V)$ is critical for $\mu_{n,k}$.  If, in addition,  the EMP matrix $M = M(\mathbb{A}^d_{r_i,r_o},V,\mu_{n,k})$  is not the zero matrix, then $(\mathbb{A}^d_{r_i,r_o},V)$ locally maximizes $\sigma_{ind_{\mu}}$ and locally minimizes $\sigma_{ind_{\mu}+N(n,d)-1}$.  Where $ind_{\mu}$  is the index  of $\mu$, while $N(n,d)$ is the multiplicity of $\mu$. 
\end{theorem}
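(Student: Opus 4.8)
The plan is to mirror the proof of Theorem~\ref{T:LocalOptimizationBall}, carrying the single-boundary argument through on each of the two spheres $\mathbb{S}^{d-1}_{r_o}$ and $\mathbb{S}^{d-1}_{r_i}$ separately. First I would settle criticality. Applying Proposition~\ref{SphericalHarmonicIntegral} to each boundary component exactly as in the ball case gives $\int_{\mathbb{S}^{d-1}_{r_o}} V_n\,dS = r_o^{d-1}\sqrt{\omega_{d-1}}\,\alpha_{0,1,r_o}$ and $\int_{\mathbb{S}^{d-1}_{r_i}} V_n\,dS = r_i^{d-1}\sqrt{\omega_{d-1}}\,\alpha_{0,1,r_i}$. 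The boundary-component volume-preserving hypothesis therefore forces $\alpha_{0,1,r_o}=\alpha_{0,1,r_i}=0$, and substituting these two vanishing coefficients into the trace formula of Proposition~\ref{TraceCalculationAnnulus} immediately yields $tr(M)=0$. To convert this into criticality, note that by Theorem~\ref{T:DKLE1} the trace of $M$ equals the sum of the branch derivatives $d\tilde{\lambda}_k(\mathbb{A}^d_{r_i,r_o};V)$ over the eigenvalue branches of $\mu_{n,k}$, and these same derivatives determine the subdifferential interval $\partial\mu_{n,k}[\mathbb{A}^d_{r_i,r_o},V]=[\min_k \tilde{\lambda}_k'(0),\,\max_k \tilde{\lambda}_k'(0)]$. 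Since a vanishing sum of real numbers cannot have all summands strictly of one sign, the minimum branch derivative is $\le 0$ and the maximum is $\ge 0$; hence $0\in\partial\mu_{n,k}[\mathbb{A}^d_{r_i,r_o},V]$, which is precisely the definition of $(\mathbb{A}^d_{r_i,r_o},V)$ being critical for $\mu_{n,k}$ given at the end of Section~\ref{MathForm}.

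For the optimization statement I would show that $M$ is Hermitian and then combine this with the trace identity. By Theorem~\ref{T:AnnularTripleProduct} each entry $M_{ij}$ splits as the difference of an outer and an inner contribution, each of the same structural form as the spherical EMP entries but with the real coefficient $B(l,r_o)$ replaced by the manifestly real coefficients $A(r_o,l,\mu_{n,k},d)$ and $A(r_i,l,\mu_{n,k},d)$. I would run the conjugation computation from the proof of Theorem~\ref{T:LocalOptimizationBall} on each contribution in turn, invoking Proposition~\ref{ConjProperty} applied to $V_{n,r_o}$ and to $V_{n,r_i}$ in the standard orthonormal basis; this shows each contribution is Hermitian, and a difference of Hermitian matrices is Hermitian. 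A nonzero Hermitian matrix has at least one nonzero real eigenvalue, and because $tr(M)=0$ forces the real eigenvalues to sum to zero, the existence of a nonzero eigenvalue guarantees both a strictly negative and a strictly positive eigenvalue. Thus $\min_k \tilde{\lambda}_k'(0)<0<\max_k \tilde{\lambda}_k'(0)$, and the discussion at the end of Section~\ref{MathForm} then lets me conclude that $(\mathbb{A}^d_{r_i,r_o},V)$ locally strictly maximizes $\sigma_{ind_{\mu}}$ and locally strictly minimizes $\sigma_{ind_{\mu}+N(n,d)-1}$.

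The only genuinely new point compared with the ball case is that the Hermitian property must now be verified for a difference of two boundary contributions rather than a single integral. Because the two contributions decouple across the two spheres and both $A$-coefficients are real, the term-by-term conjugation argument goes through unchanged, and the subsequent eigenvalue-sign reasoning is then identical to the spherical case. I expect this verification of the Hermitian structure — establishing that the inner contribution is Hermitian in its own right so that the difference remains Hermitian — to be the main, though essentially routine, obstacle.
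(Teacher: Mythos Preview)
Your proposal is correct and follows essentially the same approach as the paper's own proof: both compute the zeroth Fourier coefficient on each boundary sphere to force $\alpha_{0,1,r_o}=\alpha_{0,1,r_i}=0$, invoke Proposition~\ref{TraceCalculationAnnulus} to obtain $\mathrm{tr}(M)=0$ and hence criticality, and then establish that $M$ is Hermitian by running the conjugation argument of Theorem~\ref{T:LocalOptimizationBall} separately on the inner and outer contributions before drawing the strict max/min conclusion from the existence of a nonzero eigenvalue.
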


\begin{proof}
Arguing as in the proof of  Theorem \ref{T:LocalOptimizationBall}, we have that 
\begin{align*}
\begin{Small}\text{$
\int_{\mathbb{S}_{r_o}^{d-1}} V_{n,r_o}\ dS \ = \ r_o^{d-1}\sqrt{\omega_{d-1}} \ \alpha_{0,1,r_o}  \quad \text{and} \quad \int_{\mathbb{S}_{r_i}^{d-1}} V_{n,r_i}\ dS \ = \ r_i^{d-1}\sqrt{\omega_{d-1}} \ \alpha_{0,1,r_i}
$}\end{Small}
\end{align*}

\noindent and it follows that
\begin{align*}
\begin{Small}\text{ $V$ is boundary component volume preserving at first order on $\mathbb{A}^d_{r_i,r_o}
$}\end{Small}
\quad &\Leftrightarrow \quad
\begin{Small}\text{$
 \alpha_{0,1,r_o}  = 0 
$}\end{Small}\\
 \quad &\Rightarrow \quad
\begin{Small}\text{$
\text{trace}(M) = 0 
$}\end{Small}\\
\quad &\Rightarrow \quad
\begin{Small}\text{$
 0 \in \partial \sigma[ \mathbb{A}^d_{r_i,r_o},V] 
$}\end{Small}
\end{align*}

\noindent  Therefore $(\mathbb{A}^d_{r_i,r_o},V)$  is critical for $\sigma$.

Because  the sum of two Hermitian matrices is Hermitian, we show the EMP matrix is Hermitian by arguing as in the proof of Theorem \ref{T:LocalOptimizationBall}, that the inner and outer components of the EMP matrix are Hermitian.   The local optimization result follows by arguing as in the proof of Theorem \ref{T:LocalOptimizationBall}, that a non-zero and Hermitian EMP matrix must have a non-zero eigenvalue.
\end{proof}

%%%%%%%%%%%
% Numerics
%%%%%%%%%%%

\section{Numerical Implementation and Numerical Results}\label{Numerics}

\subsection{Method of Particular Solutions} Making use of the \textit{method of particular solutions}, we numerically investigate and illustrate Theorems \ref{T:LocalOptimizationBall} and  \ref{T:LocalOptimizationAnnulus}. We consider perturbations of spherical domains $\Omega = \mathbb{B}^d_{r_o}$ and annular domains $\Omega = \mathbb{A}^d_{r_i,r_o}$,  where for each small perturbation parameter $t\in \mathbb{R}$, we have 
\begin{equation*}
 \begin{Small}\text{$
\Omega_t = \left\{
\left[
\begin{matrix}
r\\
\vec{\theta}\\
\end{matrix}
\right] +
t \left[
\begin{matrix}
 rV(\vec{\theta})\\
0\\
\end{matrix}
\right]
: \ \left[
 \begin{matrix}
r \\
\vec{\theta}
\end{matrix}
\right] 
 \in \Omega
\right\}.
$}\end{Small}
\end{equation*}

\noindent We limit attention to deformation fields $V$ such that the normal components $V_{r_o}$ and $V_{r_i}$ can be written as a finite linear combination  of spherical harmonics taken from an arbitrary orthonormal basis  $\{Y_l^m:\ l = 0,1,\cdots;  m = 1,\cdots,N_{l,d}\}$. In this case,  a solution $u$ of the Steklov eigenvalue problem (\ref{E:StekProbGen}) on the perturbed  domain $\Omega_t$  may be expanded in a Fourier-Laplace series of regular and singular solid harmonics.  Indeed, if  $\Omega_t$ is nearly spherical, then we have an expansion in regular solid harmonics
\begin{equation*}
\begin{Small}\text{$ 
u(r,\vec{\theta})  \   =  \  \sum_{l=0}^{\infty}\sum_{m=1}^{N_{l,d}} a_{l,m} r^l  Y_l^m(\vec{\theta}),
$}\end{Small}
\end{equation*}
\noindent while if $\Omega_t$ is nearly annular and $d\geq 3$, then the expansion also includes singular solid harmonics
\begin{equation*}
\begin{Small}\text{$ 
u(r,\vec{\theta})  \   =  \  \sum_{l=0}^{\infty}\sum_{m=1}^{N_{l,d}} a_{l,m} r^l Y_l^m(\vec{\theta})) \  + \  \sum_{l=0}^{\infty}\sum_{m=1}^{N_{l,d}} b_{l,m} r^{-(d+l-2)} Y_l^m(\vec{\theta}).
$}\end{Small}
\end{equation*}
\noindent When $\Omega_t$ is nearly annular and $d=2$, care must be taken because the leading singular solid harmonic includes a logarithmic dependency on $r$. So, noting that $N_{l,2} = 2$ for all $l\geq 1$, we have
\begin{equation*}
\begin{Small}\text{$ 
u(r,\vec{\theta})  \   =  \ a_{0,1}Y^1_0 + b_{0,1}\log(r) Y_0^1 +  \sum_{l=1}^{\infty}\sum_{m=1}^2 a_{l,m} r^l Y_l^m(\vec{\theta})) \  + \  \sum_{l=1}^{\infty}\sum_{m=1}^2 b_{l,m} r^{-(d+l-2)} Y_l^m(\vec{\theta}).
$}\end{Small}
\end{equation*}
\noindent The natural logarithm occurs  because when the Steklov equation is solved on $\mathbb{A}^d_{r_i,r_o}$ by separation of variables, we find that the radially dependent factor must satisfy a Cauchy-Euler equation; and in two dimensions, the solution of this equation includes a logarithmic term when $l=0$.

Based on these Fourier-Laplace expansions, if we denote the regular solid harmonics by
\begin{equation*}
\begin{Small}\text{$ 
u^{r}_{l,m}(r,\vec{\theta}) \ = \ 
  r^l Y_l^m(\vec{\theta}),
$}\end{Small}
\end{equation*}
\noindent and the singular solid harmonics by
\begin{align*}
\begin{Small}\text{$ 
 u^s_{0,1}(r,\vec{\theta})
$}\end{Small}
 \   &=  \
\begin{Small}\text{$
\begin{cases} \log(r)Y_0^1 & d = 2,\\
 r^{-(d-2)}Y_0^1 & d\geq 3, 
\end{cases} 
$}\end{Small}\\
&\quad\\
\begin{Small}\text{$
u^{s}_{l,m}(r,\vec{\theta})
$}\end{Small}
\   &=  \ 
\begin{Small}\text{$
  r^{-(d + l+2)} Y_l^m(\vec{\theta}) \ \  \text{for}\ l \geq 1,
$}\end{Small}
\end{align*}
\noindent then, for a fixed  choice of maximum spherical harmonic degree $L$, we have the following approximate solution ansatz for the Steklov problem.  When  $\Omega_t$ is nearly spherical  
\begin{equation*}
\begin{Small}\text{$ 
u^L(r,\vec{\theta}) \   = 
 \  \sum_{l=0}^{L}\sum_{m=1}^{N_{l,d}} a_{l,m}u^{r}_{l,m}(r,\vec{\theta}),
$}\end{Small}
\end{equation*}
\noindent and when $\Omega_t$ is nearly annular 
\begin{equation*}
\begin{Small}\text{$ 
u^L(r,\vec{\theta}) \   = 
 \  \sum_{l=0}^{L}\sum_{m=1}^{N_{l,d}} a_{l,m}u^{r}_{l,m}(r,\vec{\theta}) + \sum_{l=0}^{L}\sum_{m=1}^{N_{l,d}} b_{l,m,} u^{s}_{l,m}(r,\vec{\theta}).
$}\end{Small}
\end{equation*}

\noindent In both cases $u^L$ clearly  satisfies Laplace's equation on $\Omega_t$, i.e., $\Delta u^L(r,\vec{\theta}) = 0$ for $(r,\vec{\theta}) \in \Omega_t$. Suppose for a given $\Omega_t$, we can find approximate eigenvalues $\sigma$ and corresponding  coefficients $a_{l,m}$ and $b_{l,m}$ such that  $u^L$ also satisfies the Steklov boundary condition  $\partial_n u^L(r,\vec{\theta})  = \sigma u^L(r,\vec{\theta})$  on some collection of points $(r,\vec{\theta})$ distributed across the boundary $\partial \Omega_t$.  Then letting the perturbation parameter $t$ vary over a  discrete set of values around and including zero and ordering the resulting approximate eigenvalues according to multiplicity, we produce a numerical approximation to the eigenvalue branches for $ \mathbb{A}^d_{r_i,r_o}$ as described in Theorem  \ref{T:DKLE1}.  To obtain the unknown eigenvalues $\sigma$ and corresponding  coefficients $a_{l,m}$ and $b_{l,m}$, we solve a generalized eigenvalue problem
\begin{equation}\label{E:GenEigProblem}
\begin{Small}\text{$ 
B^TA\vec{\alpha} = \sigma B^TB \vec{\alpha}
$}\end{Small}
\end{equation}
\noindent For clarity we describe the derivation of Equation (\ref{E:GenEigProblem}) for the unperturbed case when $t=0$, and later indicate the changes required in the general perturbed case of $\Omega_t$, with $t\neq 0$.

In the case when $\Omega = \mathbb{B}^d_{r_o}$ we have  $\vec{\alpha}= [a_{0,1},\cdots, a_{L,N_{L,d}}]^T$ is the column vector of unknown coefficients of $u^L$; and the matrices $A$ and $B$ are defined as follows.  Let $(r_{o}, \vec{\theta}_{o,k})_{k=1}^{K_o}$  be a collection of points distributed on the sphere $\mathbb{S}^{d-1}_{r_o}$ as described in \cite{deserno2004generate}.  If we define $A$ and $B$ to be the $K_o\times(\sum_{l=0} ^L N_{l,d})$ matrices whose $\text{k}^{th}$ rows are respectively given by  
\begin{align*}
\begin{Small} \text{$
A(k,:)
$}\end{Small}   \  &= \
\begin{Small} \text{$
\left [\partial_n u^{r}_{0,1}( r_o ,\vec{\theta}_{o,k}),\cdots,\partial_n u^{r}_{L,N_{L,d}}(r_o  ,\vec{\theta}_{o,n})\right],
$}\end{Small} \\
\begin{Small} \text{$
B(k,:)
$}\end{Small}   \  &= \
\begin{Small} \text{$
\left [u^{r}_{0,1}( r_o  ,\vec{\theta}_{o,k}),\cdots,u^{r}_{L,N_{L,d}}( r_o  ,\vec{\theta}_{o,k}) \right], 
$}\end{Small}   
\end{align*}
 then the equation $A\vec{\alpha} = \sigma B \vec{\alpha}$ expresses the requirement  that the Steklov eigenvalue equation $\partial u = \sigma  u$ is true on the boundary $\mathbb{S}^{d-1}_{r_o}$ at each of the  points    
$(r_o, \vec{\theta}_{o,k})_{k=1}^{K_o}$.   To prevent an ill-conditioned problem, we multiply both sides of this equation by $B^T$ and obtain our final generalized eigenvalue problem (\ref{E:GenEigProblem}).

When $\Omega = \mathbb{A}^{d}_{r_i,r_o}$ we have  $\vec{\alpha} = [a_{0,1},\cdots, a_{L,N_{L,d}},b_{0,1}, \cdots, b_{L,N_{L,d}}]^T$ is the column vector of unknown coefficients of $u^L$; and the matrices $A$ and $B$ are defined as follows.  Let $(r_{o}, \vec{\theta}_{o,k})_{k=1}^{K_o}$  and $(r_{i}, \vec{\theta}_{i,k})_{k=1}^{K_i}$ be collections of points distributed on the spheres $\mathbb{S}^{d-1}_{r_o}$ and $\mathbb{S}^{d-1}_{r_i}$ respectively.    Define $A_o, B_o$ to be the $K_o\times(\sum_{l=0} ^L N_{l,d})$ matrices and $ A_i, B_i$ to be the $K_i\times(\sum_{l=0} ^L N_{l,d})$ matrices whose $\text{k}^{th}$ rows are respectively given by  
\begin{align*}
\begin{Small} \text{$
A_o(k,:)
$}\end{Small}   \  &= \
\begin{Small} \text{$
\left [\partial_n u^{r}_{0,1}( r_o,\vec{\theta}_{o,k}),\cdots,\partial_n u^{r}_{L,N_{L,d}}(r_o,\vec{\theta}_{o,n}), \partial_n u^{s}_{0,1}(r_o,\vec{\theta}_{o,k}),\cdots,\partial_n u^{s}_{L,N_{L,d}}(r_o,\vec{\theta}_{o,k}) \right],
$}\end{Small} \\
\begin{Small} \text{$
B_o(k,:)
$}\end{Small}   \  &= \
\begin{Small} \text{$
\left [u^{r}_{0,1}(r_o,\vec{\theta}_{o,k}),\cdots,u^{r}_{L,N_{L,d}}(r_o,\vec{\theta}_{o,k}),  u^{s}_{0,1}(r_o,\vec{\theta}_{o,k}),\cdots, u^{s}_{L,N_{L,d}}(r_o,\vec{\theta}_{o,k}) \right], 
$}\end{Small}   \\
\begin{Small} \text{$
A_i(k,:)
$}\end{Small}  \  &= \
 \begin{Small} \text{$
\left [\partial_n u^{r}_{0,1}(r_i,\vec{\theta}_{i,k}),\cdots,\partial_n u^{r}_{L,N_{L,d}}(r_i,\vec{\theta}_{i,n}), \partial_n u^{s}_{0,1}(r_i,\vec{\theta}_{i,k}),\cdots,\partial_n u^{s}_{L,N_{L,d}}(r_i,\vec{\theta}_{i,k}) \right], 
$}\end{Small} \\
\begin{Small} \text{$
B_i(k,:)
$}\end{Small}   \  &= \
\begin{Small} \text{$
 \left[u^{r}_{0,1}(r_i,\vec{\theta}_{i,k}),\cdots,u^{r}_{L,N_{L,d}}(r_i,\vec{\theta}_{i,k}),  u^{s}_{0,1}(r_i,\vec{\theta}_{i,k}),\cdots, u^{s}_{L,N_{L,d}}(r_i,\vec{\theta}_{i,k}) \right], 
$}\end{Small}  
\end{align*}
If  we define 
\begin{equation*}
\begin{Small}\text{$
A = 
\left[
\begin{matrix}
A_o \\
A_i \\
\end{matrix}
\right]
\quad \quad \text{and} \quad \quad
B = 
\left[
\begin{matrix}
B_o \\
B_i 
\end{matrix}
\right],
$}\end{Small} 
\end{equation*}
\noindent then the equation $A \vec{\alpha} = \sigma B\vec{\alpha}$ expresses the requirement  that the Steklov eigenvalue equation $\partial u^L= \sigma u^L$ is true on both the outer and inner boundary at each of the  points    $(r_o, \vec{\theta}_{o,k})_{k=1}^{K_o}$  and $(r_i, \vec{\theta}_{i,k})_{k=1}^{K_i}$.   To prevent an ill-conditioned problem, we multiply both sides of this equation by $B^T$ and obtain our final generalized eigenvalue problem (\ref{E:GenEigProblem}).

For a perturbed domain $\Omega_t$, with $t \neq 0$,   the derivation of Equation (\ref{E:GenEigProblem}) requires only that all occurrences of the points  $(r_{o}, \vec{\theta}_{o,k})_{k=1}^{K_o}$ in the nearly spherical case, and   $(r_{o}, \vec{\theta}_{o,k})_{k=1}^{K_o}$ together with $(r_{i}, \vec{\theta}_{i,k})_{k=1}^{K_i}$  in the nearly annular case, be replaced with their images under the deformation field, i.e.,   $(r_{t,o,k}, \vec{\theta}_{o,k})_{k=1}^{K_o}$  and $(r_{t,i,k}, \vec{\theta}_{i,k})_{k=1}^{K_i}$, where $r_{t,o,k} = r_o + tr_oV(\vec{\theta}_{o,k})$ and  $r_{t,i,k} = r_i + tr_i V(\vec{\theta}_{i,k})$.  With this substitution the solutions of Equation (\ref{E:GenEigProblem}) provide approximations to the Steklov eigenvalues of $\Omega_t$.

\subsection{Numerical Results}
\begin{example}\label{E:Example2DNumerics}  In two dimensions we have that the multiplicity of any non-zero Steklov eigenvalue for either a disk or an annulus is given by  $N_{n,2} =  2$.  Therefore, the EMP matrices described in Theorems \ref{T:SphericalTripleProduct}   and \ref{T:AnnularTripleProduct} are $2\times2$ and so may be explicitly calculated. Indeed, with the following basis for the 2D spherical harmonics
\begin{align*}
\begin{Small}\text{$ 
Y_0^1
$}\end{Small} &= 
\begin{Small}\text{$ 
\frac{1}{\sqrt{2\pi}}\quad \quad \text{for}\  l = 0,
$}\end{Small}\\
\begin{Small}\text{$ 
Y_l^m 
$}\end{Small}
&=
\begin{Small}\text{$
  \frac{1}{\sqrt{2\pi}}e^{i(-1)^ml\theta} \quad \text{for}\  l\geq 0, \ m = 1,2,
$}\end{Small}
\end{align*}

\noindent we obtain the following two corollaries to Theorems \ref{T:SphericalTripleProduct}   and \ref{T:AnnularTripleProduct}  respectively.
\begin{corollary}\label{C:2DEMPMatrixBall}
 Given a deformation field $V \in W^{3,\infty}(\mathbb{B}^2_{r_o}, \mathbb{R}^2)$,   the EMP matrix $M(\mathbb{B}^2_{r_o},V,\sigma)$ for the  Steklov eigenvalue $\sigma =  \frac{n}{r_o}$ of $\mathbb{B}^2_{r_o}$ is  given by \\
\begin{equation*}
\begin{Small}\text{$ 
-\frac{n}{r_o^2 \sqrt{2\pi}}
\left[
\begin{matrix}
\alpha_{0,1,r_o} & (2n+1)\alpha_{2n,2,r_o}\\
(2n+1)\alpha_{2n,1,r_o} & \alpha_{0,1,r_o} \\
\end{matrix}
\right].
$}\end{Small}
\end{equation*}\\
\end{corollary}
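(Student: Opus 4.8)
The plan is to specialize Theorem~\ref{T:SphericalTripleProduct} to dimension $d=2$ and then evaluate the four triple-product integrals explicitly using the given Fourier basis. First I would record the value of the coefficient $B(l,r_o)$ in two dimensions: since $l(l+d-2)=l^2$ when $d=2$, we have $B(l,r_o)=-\frac{1}{r_o^2}\left(\frac{l^2}{2}+n\right)$. The sum in Theorem~\ref{T:SphericalTripleProduct} runs over even $l$ with $0\leq l\leq 2n$, and only the two extreme exponents will end up mattering, so I would pre-compute $B(0,r_o)=-\frac{n}{r_o^2}$ and $B(2n,r_o)=-\frac{n(2n+1)}{r_o^2}$.

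Next I would reduce each triple-product integral to an integral of a single complex exponential. Writing $Y_n^i=\frac{1}{\sqrt{2\pi}}e^{i(-1)^i n\theta}$, $\overline{Y_n^j}=\frac{1}{\sqrt{2\pi}}e^{-i(-1)^j n\theta}$, and $Y_l^m=\frac{1}{\sqrt{2\pi}}e^{i(-1)^m l\theta}$, the integrand on $\mathbb{S}^1$ becomes $(2\pi)^{-3/2}\exp\!\big(i\theta[(-1)^i n-(-1)^j n+(-1)^m l]\big)$. Integrating over $[0,2\pi]$ yields $\frac{1}{\sqrt{2\pi}}$ precisely when the bracketed frequency vanishes, and $0$ otherwise. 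This orthogonality is the single mechanism selecting which $(l,m)$ terms survive in each matrix entry.

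I would then run through the four index pairs. For the diagonal entries $i=j$ the frequency reduces to $(-1)^m l$, which forces $l=0$ (hence $m=1$, as $N_{0,2}=1$); combined with $B(0,r_o)$ and the integral value $\frac{1}{\sqrt{2\pi}}$, this gives both diagonal entries equal to $-\frac{n}{r_o^2\sqrt{2\pi}}\alpha_{0,1,r_o}$. For the $(1,2)$ entry the frequency is $-2n+(-1)^m l$, forcing $l=2n$ and $m=2$, while for the $(2,1)$ entry it is $2n+(-1)^m l$, forcing $l=2n$ and $m=1$; in both cases $l=2n$ sits exactly at the top of the allowed range. Multiplying by $B(2n,r_o)$ and $\frac{1}{\sqrt{2\pi}}$ produces the off-diagonal entries $-\frac{n(2n+1)}{r_o^2\sqrt{2\pi}}\alpha_{2n,2,r_o}$ and $-\frac{n(2n+1)}{r_o^2\sqrt{2\pi}}\alpha_{2n,1,r_o}$, and factoring out $-\frac{n}{r_o^2\sqrt{2\pi}}$ assembles the claimed matrix.

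The computation is routine once this bookkeeping is in place; the only spot requiring genuine care is the sign convention hidden in the exponent $(-1)^m$, which dictates whether $m=1$ or $m=2$ is the surviving index in each off-diagonal term. Getting this backwards would silently transpose the two off-diagonal coefficients, so the main (admittedly minor) obstacle is tracking consistently that $Y_l^1$ carries frequency $-l$ and $Y_l^2$ carries frequency $+l$ across both the conjugated and unconjugated factors.
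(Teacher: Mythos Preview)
Your proposal is correct and is exactly the specialization of Theorem~\ref{T:SphericalTripleProduct} to $d=2$ that the paper intends; the paper itself does not spell out the computation but simply presents the corollary as an immediate consequence of that theorem with the given Fourier basis. Your bookkeeping of $B(0,r_o)$, $B(2n,r_o)$, and the frequency-matching in the triple products is accurate, including the identification of which index $m$ survives in each off-diagonal entry.
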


\begin{corollary}\label{C:2DEMPMatrixAnnulus}
 Given a deformation field $V \in W^{3,\infty}(\mathbb{A}^2_{r_i,r_o}, \mathbb{R}^2)$,   the EMP matrix  $M(\mathbb{A}^2_{r_i,r_o},V,\mu_{n,k})$ for the  Steklov eigenvalue $\mu_{n,k}$ of $\mathbb{A}^2_{r_i,r_o}$ is  given by \\
\begin{equation*}
\begin{Small}\text{$ 
\frac{1}{\sqrt{2\pi}} 
\left[
\begin{matrix}
 A(r_o,0,\mu_{n,k} ,2)\ \alpha_{0,1,r_o} &  A(r_o,2n,\mu_{n,k} ,2)\ \alpha_{2n,2,r_o}\\
A(r_o,2n,\mu_{n,k} ,2)\ \alpha_{2n,1,r_o} &  A(r_o,0,\mu_{n,k} ,2)\ \alpha_{0,1,r_o} \\
\end{matrix}
\right] \ - \ 
\frac{1}{\sqrt{2\pi}} 
\left[
\begin{matrix}
A(r_i,0,\mu_{n,k} ,2)\ \alpha_{0,1,r_i} &  A(r_i,2n,\mu_{n,k} ,2)\ \alpha_{2n,2,r_i}\\
 A(r_i,2n,\mu_{n,k} ,2)\ \alpha_{2n,1,r_i} &  A(r_i,0,\mu_{n,k} ,2)\ \alpha_{0,1,r_i} \\
\end{matrix}
\right] ,
$}\end{Small}
\end{equation*}\\
\end{corollary}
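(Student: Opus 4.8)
The plan is to specialize Theorem~\ref{T:AnnularTripleProduct} to $d=2$ and to evaluate every triple product integral $\int_{\mathbb{S}^1} Y_n^i\,\overline{Y_n^j}\,Y_l^m\,dS$ explicitly in the given exponential basis. Since $N_{0,2}=1$ and $N_{l,2}=2$ for $l\geq 1$, the even-degree sum $\sum_{l=0}^{2n}\sum_{m}$ in that theorem runs over $m\in\{1,2\}$ for each even $l\geq 2$ and collapses to the single index $m=1$ when $l=0$; the eigenvalue $\mu_{n,k}$ has multiplicity $N_{n,2}=2$, so the EMP matrix is the $2\times 2$ array indexed by $i,j\in\{1,2\}$.

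First I would substitute the basis into the triple product. On the unit circle $dS=d\theta$, and since $Y_n^i=\tfrac{1}{\sqrt{2\pi}}e^{i(-1)^i n\theta}$, $\overline{Y_n^j}=\tfrac{1}{\sqrt{2\pi}}e^{-i(-1)^j n\theta}$, and $Y_l^m=\tfrac{1}{\sqrt{2\pi}}e^{i(-1)^m l\theta}$, each integrand is a single complex exponential:
\[
\int_{\mathbb{S}^1} Y_n^i\,\overline{Y_n^j}\,Y_l^m\,dS
= \frac{1}{(2\pi)^{3/2}}\int_0^{2\pi} e^{\,i\left[(-1)^i n-(-1)^j n+(-1)^m l\right]\theta}\,d\theta,
\]
which equals $\tfrac{1}{\sqrt{2\pi}}$ when the bracketed integer frequency is zero and vanishes otherwise.

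Next I would solve the resulting selection rule $(-1)^i n-(-1)^j n+(-1)^m l=0$ case by case. For $i=j$ the first two terms cancel and the rule forces $l=0$ (hence $m=1$); for $(i,j)=(1,2)$ the difference equals $-2n$, which is matched only by $m=2,\ l=2n$; and for $(i,j)=(2,1)$ the difference equals $+2n$, matched only by $m=1,\ l=2n$. In every case the surviving index $l\in\{0,2n\}$ is even and lies in the admissible range $0\leq l\leq 2n$, so exactly one term of each double sum in Theorem~\ref{T:AnnularTripleProduct} is nonzero. Inserting the surviving value $\tfrac{1}{\sqrt{2\pi}}$ and retaining both the outer ($r_o$) and inner ($r_i$) blocks then produces the asserted matrix, precisely as in the proof of Corollary~\ref{C:2DEMPMatrixBall} but with the annular coefficients $A(r,l,\mu_{n,k},2)$ in place of $B(l,r_o)$ and an additional subtracted inner block.

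The calculation is routine; the only delicate point is the bookkeeping of the three signs $(-1)^i,(-1)^j,(-1)^m$ in the exponent, since a sign slip there would interchange the roles of $m=1$ and $m=2$ in the off-diagonal entries (that is, of $\alpha_{2n,1}$ and $\alpha_{2n,2}$) and so transpose the off-diagonal structure. I would verify the final answer against the Hermitian symmetry established in the proof of Theorem~\ref{T:LocalOptimizationAnnulus}: with $A$ real and $V$ real-valued one has $\overline{\alpha_{2n,2}}=\alpha_{2n,1}$, which makes $M_{2,1}=\overline{M_{1,2}}$ exactly as required.
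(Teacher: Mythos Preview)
Your proposal is correct and is exactly the computation the paper has in mind: the paper does not give a separate proof of this corollary but simply declares it as the specialization of Theorem~\ref{T:AnnularTripleProduct} to $d=2$ with the stated exponential basis, and your explicit evaluation of the triple product integrals via the selection rule $(-1)^i n-(-1)^j n+(-1)^m l=0$ carries this out in full. The case analysis and the resulting matrix entries match the statement, and your Hermitian-symmetry sanity check is a sensible extra verification.
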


Note that when $V$ is boundary component volume preserving at first order, as in Theorems \ref{T:LocalOptimizationBall} and  \ref{T:LocalOptimizationAnnulus}, we have that $\alpha_{0,1,r_o} = \alpha_{0,1,r_i} = 0$.  It follows that given a Steklov eigenvalue $\sigma = \frac{l}{r_o}$ of a disk or $\mu_{n,k}$ of an annulus, a  necessary condition for the EMP matrix to be non-zero is that at least one of 
$\alpha_{2n,1,r_o}$, $\alpha_{2n,2,r_o}$ respectively $\alpha_{2n,1,r_o}$, $\alpha_{2n,1,r_i}$, $\alpha_{2n,2,r_o}$, $\alpha_{2n,2,r_i}$ be non-zero.  For a disk this condition is also sufficient.  Interestingly, for a 2D annulus it is possible for the EMP matrix to be zero even when all of $\alpha_{2n,1,r_o}$, $\alpha_{2n,1,r_i}$, $\alpha_{2n,2,r_o}$, $\alpha_{2n,2,r_i}$ are non-zero.  Indeed, this will be the case provided the Fourier coefficients are selected non-zero and satisfying 
\begin{equation}\label{RatioCondition}
\frac{\alpha_{2n,1,r_o}}{\alpha_{2n,1,r_i}} = \frac{\alpha_{2n,2,r_o}}{\alpha_{2n,2,r_i}} = \frac{A(r_i,2n,\mu_{n,k} ,2)}{A(r_o,2n,\mu_{n,k} ,2)}.
\end{equation}

In Figure \ref{fig:2DBowtieResponse}(a) we visualize some eigenvalue branches and their tangent lines at zero when $\mathbb{A}^2_{0.4,1}$ is perturbed by a deformation field $V$ with $V_{n,r_o} = V_{n,r_i} = 2\cos(6\theta)$ In this case, condition (\ref{RatioCondition}) is not satisfied for $n=3$,  and the EMP matrices of $\mu_{3,1} = 2.944$ and $\mu_{3,2} = 7.642$ are non-zero.  We see that the branches of these two eigenvalues demonstrate the characteristic "bow tie" response to the perturbation.  The EMP matrices of all other eigenvalues are zero, and their branches  all have slope zero at the initial shape. In Figure \ref{fig:2DBowtieResponse}(b) the same domain has been perturbed by a deformation field $V$ with  $V_{n,r_o} = V_{n,r_i} = 2\cos(5\theta)$.  Since there are no non-zero even indexed Fourier coefficients, it follows that the EMP matrix of every eigenvalue is identically zero; and every eigenvalue branch has slope zero at the initial shape.  In both cases, observe the robust agreement between the slope at zero of the numerically generated eigenvalue branches and the corresponding tangent lines determined analytically from Corollary \ref{C:2DEMPMatrixAnnulus}.

\begin{figure}[h!]
\centering\includegraphics[width = 5in]{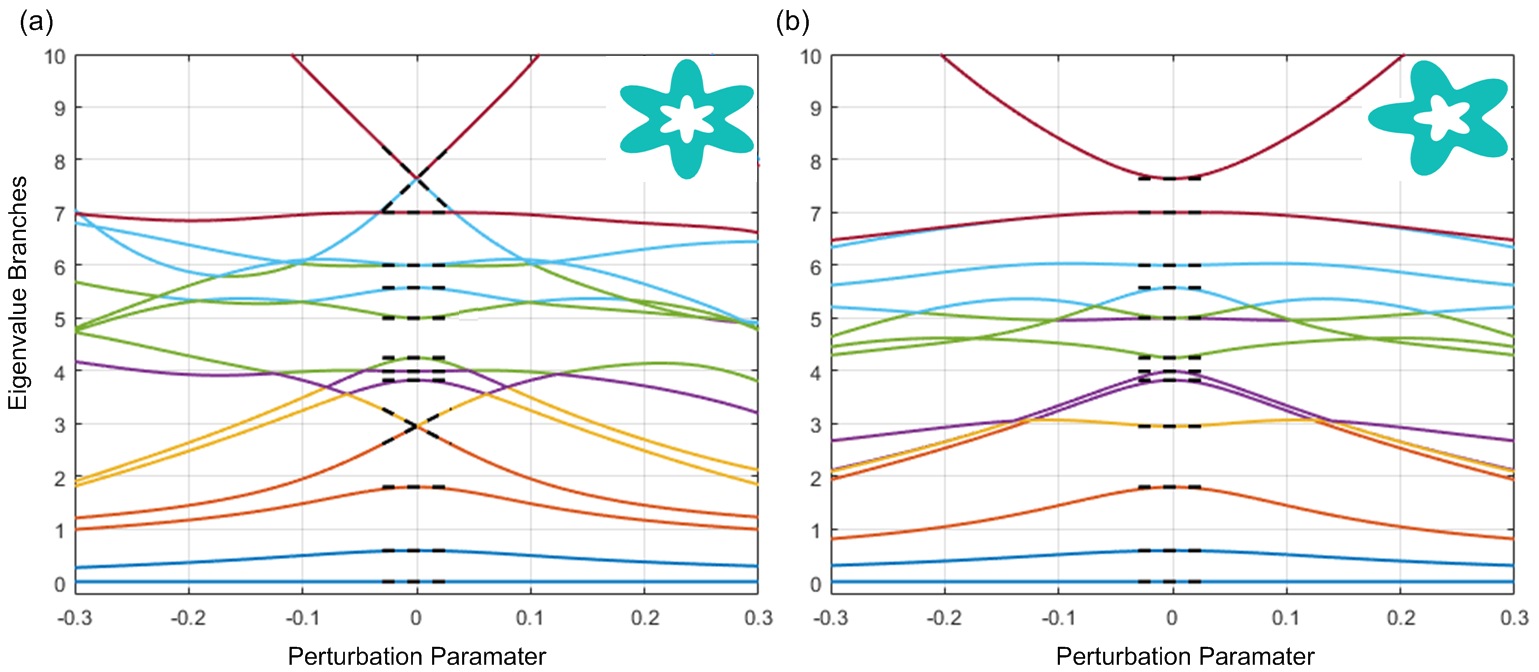}
\caption{ Eigenvalue Branches of the annulus $\mathbb{A}^2_{0.4,1}$ where $L = 7$, $K_o = 28$, and $K_i = 20$.  In  (a)  we use $V_{n,r_i} = V_{n,r_o} = 2\cos(6\theta)$ and in (b) we use  $V_{n,r_i} = V_{n,r_o} = 2\cos(5\theta)$.  }
\label{fig:2DBowtieResponse}
\end{figure}

The local maximization results obtained in Theorems   \ref{T:LocalOptimizationBall} and  \ref{T:LocalOptimizationAnnulus} required that the EMP matrix under consideration be non-zero, raising the question of the possibility of weakening or eliminating this assumption.  In Figure \ref{fig:2DMaxMinPossible}, we see that for unit disk $\mathbb{B}^2$ perturbed by $V$ with normal component $V_{n,1} = \cos(7\theta)$ we have that $(B^2,V)$ appears to locally strictly minimizes both the first and last eigenvalue branches of  $\sigma = 5$, while when $V$ has normal component $V_{n,1} = \sin(5\theta)$ we have that the opposite appears to be true, and $(B^2,V)$ appears to locally strictly maximizes the first and last eigenvalue branches of $\sigma = 5$.  In the same figure we also see that for the annulus $\mathbb{A}^2_{0.4,1}$ perturbed by $V$ with normal components $V_{n,0.4} = \cos(7\theta), V_{n,1} = \cos(7\theta)$ it appears that $(\mathbb{A}^2_{0.4,1},V)$ locally strictly minimizes the first and last eigenvalue branches of $\mu_{5,2}$, while when $V$ has normal component $V_{n,0.4} = 0, V_{n,1} = \sin(5\theta)$ it appears that $(\mathbb{A}^2_{0.4,1},V)$ locally strictly maximizes the first and last eigenvalue branches of $\mu_{5,2}$. Notice that in all four examples the associated EMP matrix is identically zero; and so, based on the numerics, we conclude that our assumption is required to conclude local strict maximization of the bottom eigenvalue branch and local strict minimization of the top eigenvalue branch.

\begin{figure}[h!]
\centering\includegraphics[width = 5in]{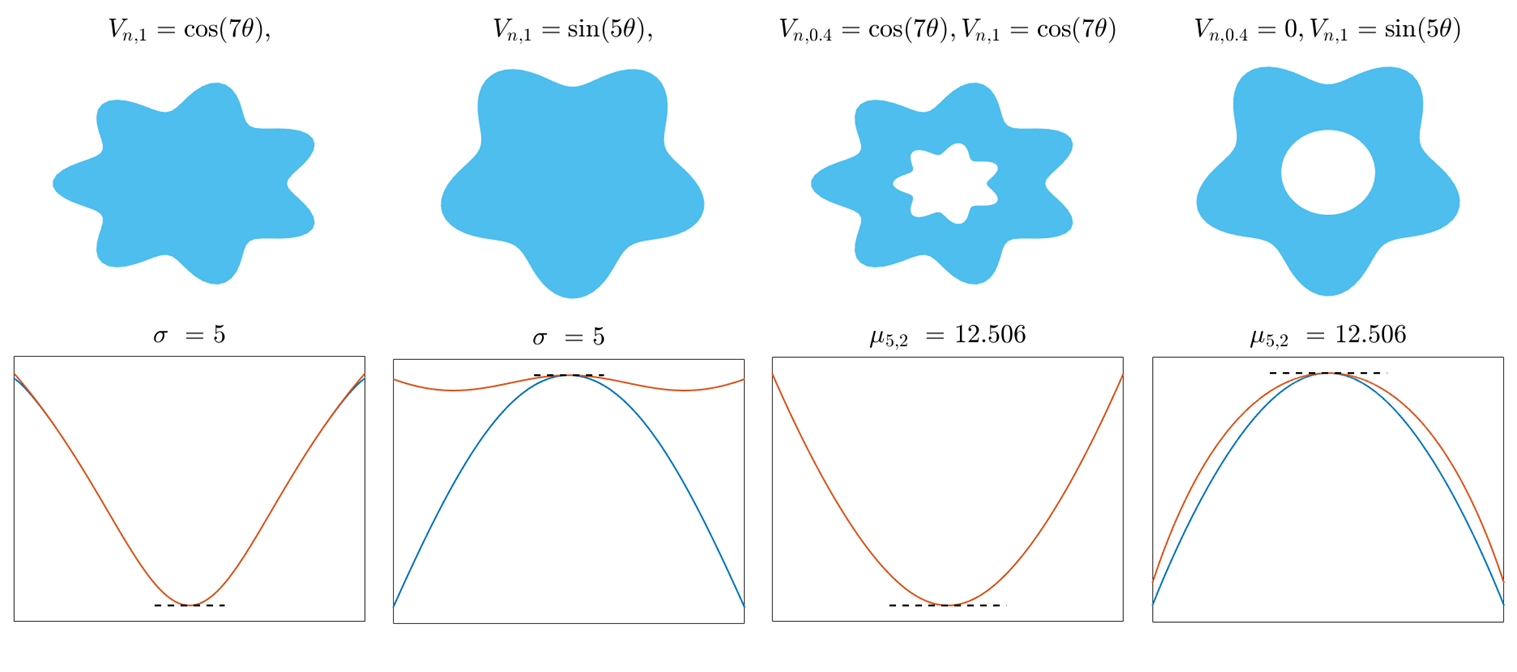}
\caption{Eigenvalue Branches for the eigenvalue $\sigma = 5$ of the unit disk $B^2$ and for the eigenvalue $\mu_{5,2}$ of  $\mathbb{A}^2_{0.4,1}$.  Here we have used $L = 7$, $K_o = 28$, and $K_i = 20$.} 
\label{fig:2DMaxMinPossible}
\end{figure}

\end{example}

\begin{example}\label{E:Example3DNumerics}  In three dimensions we have that the multiplicity of any Steklov eigenvalue $\sigma = \frac{n}{r_o}$ for a disk or $\mu_{n,k}$ for an annulus is given by  $N_{n,2} = 2n+1$, and so the size of the corresponding EMP matrix grows with $n$.  We determine the eigenvalues of these matrices numerically.  We compute the entries of the EMP matrix using the standard orthonormal basis given by Appendix \ref{3DStdBasis}.
\noindent With this choice of basis, we have that  the triple product integrals appearing in  Theorems \ref{T:SphericalTripleProduct}   and \ref{T:AnnularTripleProduct} may be expressed in terms of Wigner-3j symbols as follows
\begin{equation}\label{Wigner3j}
\begin{Small}\text{$
\int_{\mathbb{S}^{2}}Y_{l_1}^{m_1} \overline{Y_{l_2}^{m_2}} Y_{l_3}^{m_3} dS =  
  (-1)^{m_2}\sqrt{\frac{(2l_1 + 1)(2l_2+1)(2l_3 +1)}{4\pi}}
\left(
\begin{matrix}
l_1 & l_2 & l_3 \\
0 & 0 & 0 \\
\end{matrix}
\right)
\left(
\begin{matrix}
l_1 & l_2 & l_3 \\
m_1 & -m_2 & m_3 \\
\end{matrix}
\right).
$}\end{Small} 
\end{equation} 

\noindent A convenient expression for the numerical determination of the eigenvalues of the EMP matrix in the three-dimensional case \cite{Kobi2008}.

An examination of the formulas in Theorems \ref{T:SphericalTripleProduct}   and \ref{T:AnnularTripleProduct}  yields a necessary condition for the non-vanishing of the EMP matrix of a Steklov eigenvalue $\sigma = \frac{n}{r_o}$ or $\mu_{n,k}$ of either a spherical or annular domain, respectively.   Under the assumption that $V$ is boundary component volume preserving at first order, for either a spherical or annular domain a  necessary condition that  the EMP matrix of a Steklov eigenvalue be non-zero is provided by the non-vanishing of at least one Fourier coefficients  $\alpha_{l,m,r_o}$ for spherical domains, and $ \alpha_{l,m,r_i}, \alpha_{l,m,r_o}$ for annular domains, where $2\leq l \leq 2n, l  \ \text{even}$ and $-l\leq m \leq l $, (see \cite[Corollary 2.3]{viator2022steklov} where this condition is also observed).  As in dimension 2, this condition is also sufficient for spherical domains, while not being sufficient in the case of annular domains because of the possibility of cancellation between the inner and outer components of the EMP matrix.

In Figure \ref{fig:3DBowtieResponse} we   visualize the eigenvalue branches and their tangent lines at zero for a selection of eigenvalues $\mu_{n,1}$ of $\mathbb{A}^3_{0.4,1}$.  Where the annular domain  is perturbed by a deformation field $V$ with $V_{n,r_o} = V_{n,r_i} = Y_{8,1}$.  Note that $Y_{8,1}$ denotes the real spherical harmonic of degree 8 and order 1.  In this case the EMP matrix of $\mu_{n,1}$ is identically zero for $n<3$, and the eigenvalue branches all have slope zero at the initial shape.  On the other hand, for $n\geq 4$ the EMP matrix is non-zero, and the eigenvalue branches demonstrate the "bowtie" response to perturbation of the initial shape, characteristic of a non-zero EMP matrix.  Again we observe robust agreement between the analytically determined dotted black tangent lines and the numerically generated eigenvalue  branches.

\begin{figure}[h!]
\centering\includegraphics[width = 5in]{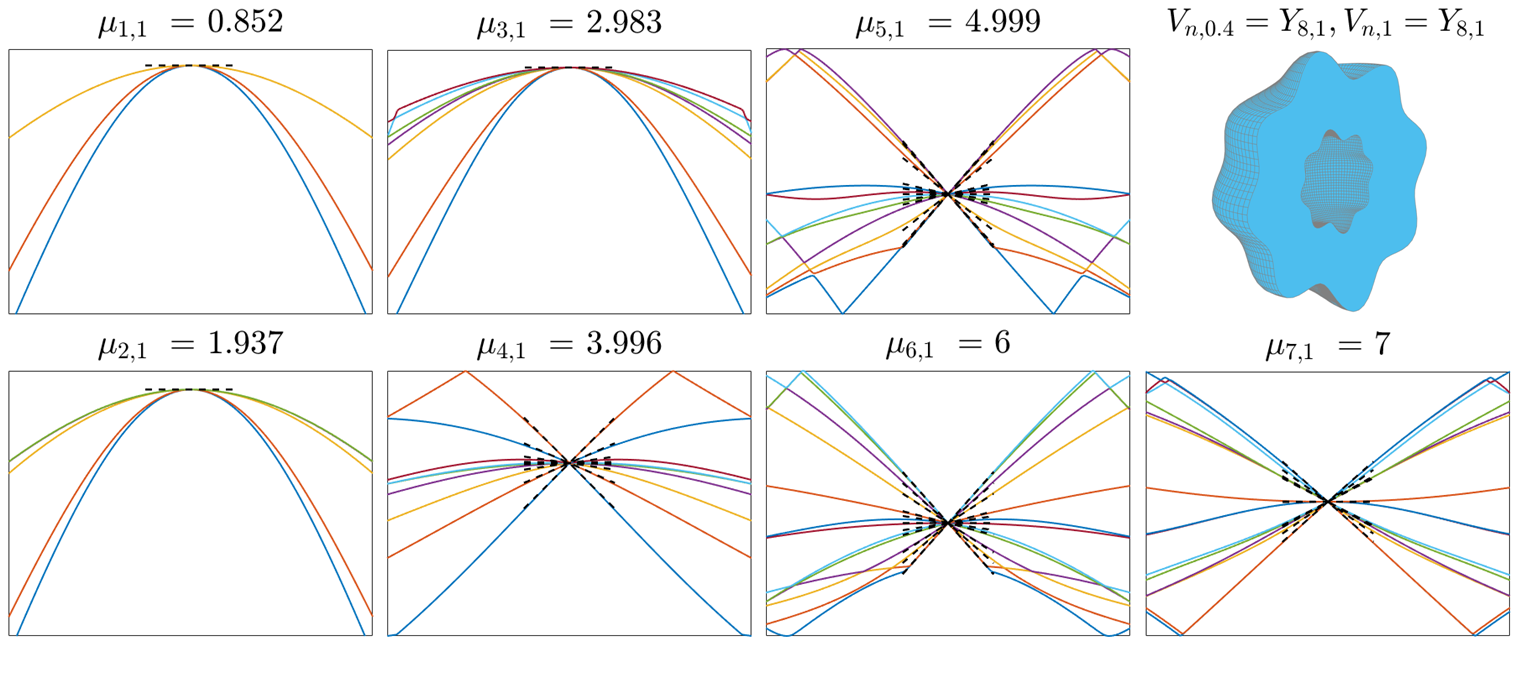}
\caption{$\mathbb{A}^3_{0.4,1}$ Eigenvalue Branches with $V_{n,r_i} = Y_{8,1}, \ V_{n,r_o}  = Y_{8,1}$.   Here we have used $L = 7$, $K_o = 28$, and $K_i = 20$.}  
\label{fig:3DBowtieResponse}
\end{figure}

Finally, in Figure \ref{fig:3DMaxMinPossible}, we see that when its EMP matrix   under perturbation is identically zero, the eigenvalue $\mu_{2,2}$  of  $\mathbb{A}^3_{0.4,1}$ can both locally maximize and  locally minimize both the first and last branch of its eigenvalue branches. Again, the numerics indicate that for local strict optimization the assumption that the EMP matrix be non-zero is required in  \ref{T:LocalOptimizationAnnulus}.  Interestingly an example of similar behavior for the first branch for an eigenvalue of a spherical domain in dimension 3 was not forthcoming, and so the numerics does not immediately support the requirement that the EMP matrix be non-zero.  For instance, we see that perturbing the unit ball by $Y_7^1$ or by $Y_5^5$ results in the unit ball $\mathbb{B}^3$ locally maximizing the first branch of its eigenvalue branches.  

\begin{figure}[h!]
\centering\includegraphics[width = 5in]{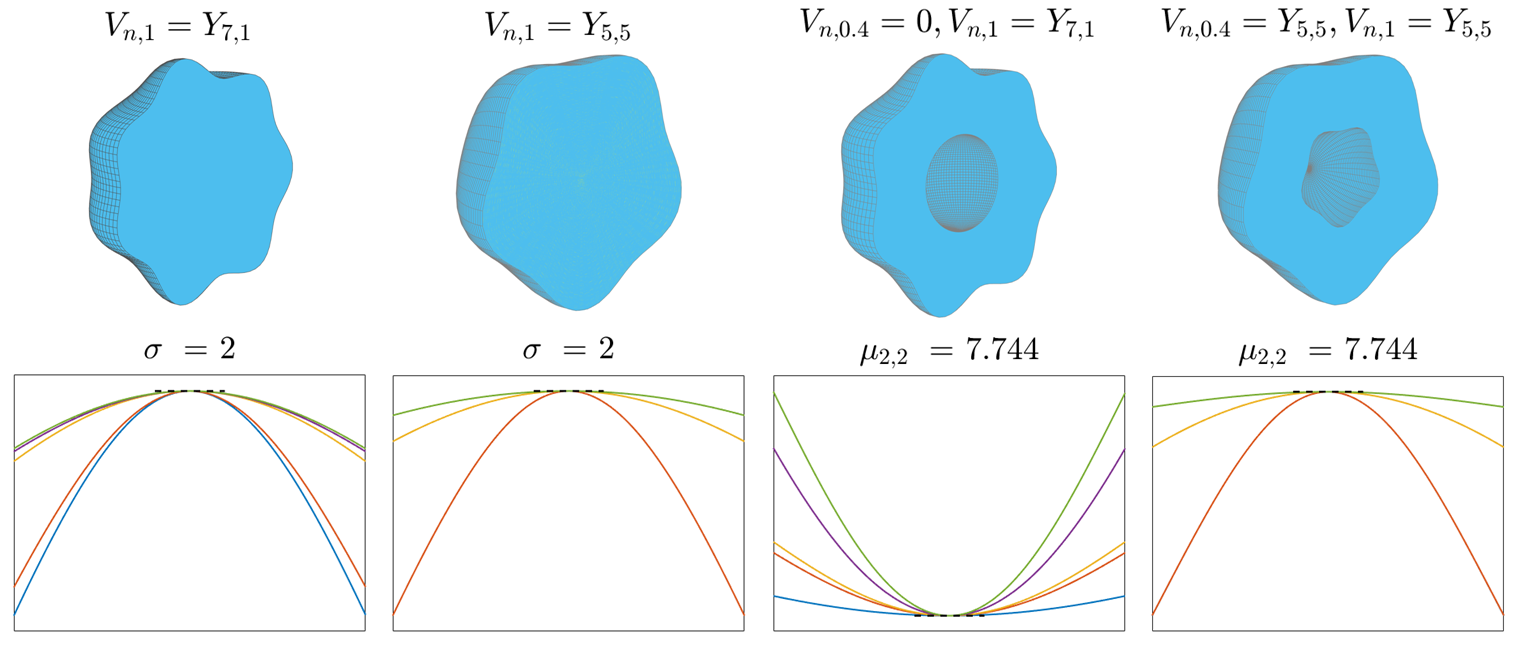}
\caption{Eigenvalue Branches for the eigenvalue $\sigma = 2$ of  of $B^3_1$ and for the eigenvalue $\mu_{2,2}$ of  $\mathbb{A}^3_{0.4,1}$.  Here we have used $L = 7$, $K_o = 28$, and $K_i = 20$.} 
\label{fig:3DMaxMinPossible}
\end{figure}

\end{example}

\section{Conclusion and Future Work}

In this paper, we studied how Steklov eigenvalues vary when a spherical domain or an annular domain in dimensions $d\ge 2$ is perturbed by a sufficiently smooth deformation field. By using a Green-Beltrami identity and that spherical harmonic functions are eigenfunctions of the surface Laplacian, we demonstrated that  the derivatives of multiple Steklov eigenvalue branches are eigenvalues of a matrix whose entries are determined by finite sums of terms that involve the integral of the product of three spherical harmonic functions. It would be of interest to determine if a similar analytic result could be obtained for other symmetric star shaped domains and  their corresponding concentric "annular" versions, for instance ellipsoidal domains and ellipsoidal annular domains.  It would also be of interest to consider even more general domains, and to determine the EMP matrix eigenvalues numerically based directly on Theorem \ref{T:DKLE1}.  

Also, by determining sufficient conditions that imply the trace of its EMP matrix is zero, we show that for a Steklov eigenvalue $\sigma$ of a spherical or annular domain, the pair $(\mathbb{B}^d_{r_o}, V)$, respectively $(\mathbb{A}^d_{r_i,r_o}, V)$, is critical for the eigenvalue  provided the deformation field $V$ is sufficiently smooth.  In addition, we show that if the EMP matrix is not identically zero, then  $(\mathbb{B}^d_{r_o}, V)$, respectively $(\mathbb{A}^d_{r_i,r_o}, V)$, locally maximizes the first branch and locally minimizes the last branch of the eigenvalues branches of $\sigma$.  For spherical domains our sufficient condition, $V$ is sufficiently smooth and volume preserving at first order, is equivalent to the trace of the EMP equaling zero.  In the case of annular domains, the corresponding condition,  $V$ is sufficiently smooth and boundary component volume preserving at first order, implies but is not equivalent to the EMP matrix trace equaling zero; because of the possibility of cancellation between the perturbations on the inner and outer domains.  It would be of interest, in the case of annular domains, to develop a natural geometric condition on $V$ which is equivalent to the vanishing of the EMP matrix trace.  

Finally, numerically, we observe robust agreement between the tangent lines obtained from EMP matrix eigenvalues and the simulated eigenvalue branches obtained using the method of particular solutions.  We also used numerics to investigate the assumption that the EMP matrix not be identically zero, required in our proof that the initial shape locally maximizes the first branch and locally minimizes the last branch of the eigenvalues branches of a given Steklov eigenvalue. Numerically it appears that in two dimensions when the EMP matrix is zero, depending on the deformation field $V$,  both a disk and annulus can either maximize or minimize the first branch among a particular Steklov eigenvalues branches.  So numerically it appears the non-zero EMP matrix assumption is required.  In three dimensions, we observe similar behavior for annular domains; but  we could not find an example where a given eigenvalues first branch is both maximized and minimized by a spherical domain.  Instead, the numerics supports the implication that the first branch is always maximized by a spherical domain. It would be of interest to either establish or refute this result analytically. 

More generally it would be of interest to develop analytic methods which determine the solution to the local optimization problem, even when the EMP matrix is zero; and so a second order analysis like that carried out in Dambrine et al. \cite{dambrine2014extremal} would be of interest.  The results in this paper were achieved making use of elementary properties of the spherical harmonics, in particular no theoretical use of the cumbersome Wigner-3j formulas was needed.  It is our expectation that the techniques developed in this paper will render the second order analysis far more tractable compared to the difficulties encountered when employing the Wigner-3j symbol.

\appendix

\section{Eigenvalue/Eigenfunction Formulas for Annular Domains}\label{AnnEigFormulas}

%%%%%%%%%%%
% Steklov Problem on Annular Domains
%%%%%%%%%%%

We discuss the eigenvalues and eigenfunctions for the Steklov problem   on $\mathbb{A}^d_{r_i,r_o}$, the $d$-dimensional annulus with outer radius $r_o$ and inner radius $r_i$.  For annular domains the Steklov eigenvalues are no longer conveniently ordered  according to multiplicity. In general, to each space of spherical harmonics $\mathbb{Y}_l^d$  is associated a pair of eigenvalues $\mu_{l,1}$ and $\mu_{l,2}$ whose eigenspaces are distinguished by a radially dependent multiplicative factor.  For details of the derivation see \cite{martel2014spectre} and for formulas similar to those given below see  \cite[Section 4]{ftouhi2022place}. We breakup our description of the Steklov eigenvalues and corresponding eigenspaces into two cases,  because in dimension $d=2$ we have an eigenfunction with a logarithmic radial term, which does not occur in dimension $d\geq 3$. In what follows  $\{Y_l^m:\ l = 0,1,\cdots;  m = 1,\cdots,N_{l,d}\}$  denotes an arbitrary orthonormal  basis for the vector space of all $d$-dimensional spherical harmonics $\bigoplus_{l=0}^{\infty} \mathbb{Y}_l^d$. \\ 

When $l=0$ we have a pair of eigenvalues given by 
\begin{equation*}
\mu_{0,1} =  0  \quad \text{and} \quad 
\mu_{0,2} =   
\begin{cases}
-\frac{r_i + r_o}{r_i r_o \ln \left(\frac{r_i}{r_o}\right)}, &   \  d =2,\\
\quad & \quad \\
\frac{(d-2)(r_o^{d-1} + r_i^{d-1})}{r_i r_o (r_o^{d-2} - r_i^{d-2})}, &   \ d\geq 3.\\
\end{cases}
\end{equation*}

\noindent   Each eigenvalue $\mu_{0,k}, k =1,2$ is simple with corresponding eigenfunction
\begin{align*}
\begin{Small}\text{$
u_{0,1}^1  (r,\theta_1,\cdots\theta_{d-1})
$}\end{Small}
 &=
\begin{Small}\text{$
\underbrace{ \frac{1}{\sqrt{r_i^{2(d-1)}+r_o^{2(d-1)}}} }_{N(r_i,\mu_{0,1} ,d)}Y_0^1(\theta_1,\cdots,\theta_{d-1})
$}\end{Small}\\
&\quad\\
\begin{Small}\text{$
u_{0,2}^1  (r,\theta_1,\cdots,\theta_{d-1})
$}\end{Small}
 &=
\begin{Small}\text{$
\begin{cases}
\quad \underbrace{ \frac{\text{ln}(r)}{\sqrt{\text{ln}(r_i)^2r_i+ \text{ln}(r_o)^2r_o}} }_{N(r_i,\mu_{0,2} ,2)}Y_0^1(\theta_1) &   \  d =2\\
\quad & \quad\\
\quad \underbrace{\frac{r^{-(d-2)} + 1}{(r_i^{d-1} + r_i) + (r_o^{d-1} + r_o)}}_{N(r_i,\mu_{0,2} ,d)}Y_0^1(\theta_1,\cdots,\theta_{d-1}) &   \  d\geq 3\\
\end{cases}
$}\end{Small}
\end{align*}

\noindent Here and below we write $N(r_i,\mu_{l,k} ,d)$ for the radial dependence $r$ of the dimension $d$ eigenfunctions of $\mu_{n,k}$  orthonormalized on  $\mathbb{S}^1_{r_o} \cup \mathbb{S}^1_{r_i}$.\\

  For $l\geq1$ we have a pair of eigenvalues $\mu_{l,1}$ and $\mu_{l,2}$ given by the zeros of the quadratic equation: 
\begin{equation*}
\begin{Small}\text{$
\mu^2 -B \mu+  \frac{l\left(l+d-2\right)}{r_i r_o} = 0
$}\end{Small}
\end{equation*}
\noindent where
\begin{equation*}
\begin{Small}\text{$
B = \frac{ (l+d-2)\left(r_o^{2l+d-1} +r_i^{2l+d-1}\right)+lr_i r_o\left(r_o^{2l+d-3} +r_i^{2l+d-3}\right)}{r_i r_o\left(r_o^{2l+d-2} - r_i^{2l+d-2}\right)} \\
$}\end{Small}
\end{equation*}

\noindent Each eigenvalue $\mu_{l,k}, k =1,2$ has multiplicity $N_{l,d}$ with corresponding basis of Eigenfunctions 
\begin{equation*}
\begin{Small}\text{$
u_l^m(r,\theta_1,\cdots,\theta_{d-1}) \ =\underbrace{ \left(\frac{a(l,k)}{c(l,k)} r^l +  \frac{b(l,k)}{c(l,k)} r^{-(d+l-2)}\right)}_{N(r_i,\mu_{l,k} ,d)} Y_l^m(\theta_1,\cdots,\theta_{d-1})\ \quad m=1,\cdots,N_{l,d}
$}\end{Small}
\end{equation*}

\noindent where
\begin{align*}
a(l,k) &=  \begin{Small}
\text{$  (d+l-2)\left(r_o^{-(d+l-1)} - r_i^{-(d+l-1)} \right) + \mu_{l,k}\left( r_o^{-(d+l-2)} + r_i^{-(d+l-2)}\right) $}
\end{Small}\\
& \quad\\
b(l,k) &=  \begin{Small}
\text{$  l \left( r_o^{l-1} -  r_i^{l-1} \right) - \mu_{l,k}\left( r_o^{l} + r_i^{l}\right)$}
\end{Small}\\
&\quad\\
c(l,k) &= 
\begin{Small}
\text{$ \sqrt{ r_i^{d-1}\left(a(l,k)r_i^l+ b(l,k)r_i^{-(d+l-2)}\right)^2 + r_o^{d-1}\left(a(l,k)r_o^l+ b(l,k)r_o^{-(d+l-2)}\right)^2}$}
\end{Small}
\end{align*}

\noindent Take note that the particular choice of coefficients $a(l,k)$  and $b(l,k)$ are not unique.  In fact they are obtained by solving a singular system of equations whose determinant set equal to zero gives rise to the quadratic equation for $\mu_{l,k}$.

\section{Standard Orthonormal Basis for Spherical Harmonics}\label{StandardBasisSphHar}

%%%%%%%%%%%
% Standard Orthonormal Basis Spherical Harmonics
%%%%%%%%%%%

 In this example, for $d\geq 3$, we define a \textit{standard orthonormal basis} for $\bigoplus_{l=0}^{\infty} \mathbb{Y}_l^d$  which has particularly nice behavior under conjugation. In dimension $d=3$, define the standard orthonormal basis for the spherical harmonics
\begin{equation}\label{3DStdBasis}
\begin{Small}\text{$
Y_l^m(\theta,\phi)= \sqrt{\frac{(2l+1)(l-m)!}{4\pi(l+m)!}}P_l^m(\cos\theta)e^{im\phi} \quad \text{for}\  l\geq 0,\  m = -l, \cdots, l.
$}\end{Small} 
\end{equation} 
\noindent Where $P_l^m(x)$ denotes an associated Legendre polynomial (see \cite[Section 3.4]{AveryHypSphHar}.   Here $l$ is the degree of the spherical harmonics and $m$ enumerates the particular basis elements for the  space $\mathbb{Y}_l^d$.  Notice that the index $m$ ranges from $-l$ to $l$.  This choice allows easy representation of the fact that the standard basis has symmetry under conjugation.  Indeed, it is well known that the associated Legendre polynomials satisfy the condition $P_l^m(x) = (-1)^m\frac{(l+m)!}{(l-m)!}P_l^{-m}$ from which it follows that the standard basis elements satisfy the following conjugation relation
\begin{equation}\label{3DStdBasisParity}
\begin{Small}\text{$
\overline{Y}_l^m  \ = \ (-1)^m Y_l^{-m}.
$}\end{Small} 
\end{equation} \\

For dimension $d\geq 4$, we may build up a standard basis recursively starting with the basis elements in \ref{3DStdBasis} (see \cite[Sections 3.5-3.6]{AveryHypSphHar}) and we obtain the following

\begin{equation}\label{dDStdBasis}
\begin{Small}\text{$
Y_{\mu_1,\cdots,\mu_{d-3},l,m}(\theta_1,\cdots,\theta_{d-2},\phi)= N(\vec{\mu},l,m)\left[\prod_{j=1}^{d-3} C_{\mu_j - \mu_{j+1}}^{\alpha_j+\mu_{j+1}}\left(\cos\theta_j\right)\left(\sin \theta_j\right)^{\mu_j +1}\right] Y_l^m(\theta_{d-2},\phi)
$}\end{Small} 
\end{equation}
\noindent Where $\mu_1$ is the degree of the spherical harmonic basis element, the indices $\vec{\mu},l,m = \mu_1,\cdots,\mu_{d-3}, l, m$ satisfy $\mu_1\geq\mu_2\geq \cdots \geq \mu_{d-3}\geq l \geq \vert m \vert$, and $2\alpha_j = d - j - 1$. The coefficient $N(\vec{\mu},l,m)$ normalizes $Y_{\vec{\mu},l,m}$ in $L^2(\mathbb{S}^{d-1})$ and $C_{\mu_j - \mu_{j+1}}^{\alpha_j+\mu_{j+1}}$ are Gegenbauer polynomials. The normalizing coefficient and the product of the Gegenbauer polynomials are real quantities; and so, it follows form (\ref{3DStdBasisParity}) that the higher dimensional standard basis elements also satisfy a conjugation relation
\begin{equation}\label{dDStdBasisParity}
\begin{Small}\text{$
\overline{Y}_{\vec{\mu},l,m} \ = \ (-1)^m Y_{\vec{\mu},l,-m}.
$}\end{Small} 
\end{equation} \\

Making use of (\ref{dDStdBasisParity}) we prove the following conjugation property for real-valued function on the sphere.

\begin{prop} \label{ConjProperty} If $f:\mathbb{S}^{d-1}\to \mathbb{R}$  has Fourier-Laplace expansion in the standard orthonormal basis
\begin{equation*}
\begin{Small}\text{$
f = \sum_{\mu_1 = 0}^{\infty}\ \sum_{{\mu_1,\cdots,\mu_{d-3},l,m}} \alpha_{\vec{\mu},l,m}Y_{\vec{\mu},l,m}
$}\end{Small}
\end{equation*}
then we have the following conjugation formula
\begin{equation}\label{ConjFormula}
\begin{Small}\text{$
\overline{\alpha}_{\vec{\mu},l,m}\overline{Y}_{\vec{\mu},l,m} \ = \ \alpha_{\vec{\mu},l,-m}Y_{\vec{\mu},l,-m}  
$}\end{Small}
\end{equation}
\end{prop}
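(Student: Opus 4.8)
The plan is to exploit that $f$ is real-valued, so $f = \overline{f}$, together with the uniqueness of the Fourier-Laplace expansion and the conjugation relation (\ref{dDStdBasisParity}) for the standard basis. First I would conjugate the given expansion of $f$ term by term, obtaining $\overline{f} = \sum \overline{\alpha}_{\vec{\mu},l,m}\overline{Y}_{\vec{\mu},l,m}$, and then substitute $\overline{Y}_{\vec{\mu},l,m} = (-1)^m Y_{\vec{\mu},l,-m}$ to re-express the whole sum in terms of the basis elements themselves rather than their conjugates.

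Next I would re-index the resulting sum by replacing the summation variable $m$ with $-m$, which is legitimate because $m$ ranges symmetrically over $-l,\dots,l$ for each fixed $\vec{\mu}$ and $l$. Using $(-1)^{-m} = (-1)^m$, this produces a second genuine Fourier-Laplace expansion $\overline{f} = \sum (-1)^m \overline{\alpha}_{\vec{\mu},l,-m}\, Y_{\vec{\mu},l,m}$ in the standard basis.

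Since $f = \overline{f}$ and the Fourier-Laplace coefficients are unique (by orthonormality, see (\ref{SHOrthonormal})), I would match the coefficients of each $Y_{\vec{\mu},l,m}$ in the two expansions to deduce the coefficient identity $\alpha_{\vec{\mu},l,m} = (-1)^m \overline{\alpha}_{\vec{\mu},l,-m}$, equivalently $\alpha_{\vec{\mu},l,-m} = (-1)^m \overline{\alpha}_{\vec{\mu},l,m}$ after sending $m \mapsto -m$. Finally, substituting this identity together with (\ref{dDStdBasisParity}) into the claimed formula (\ref{ConjFormula}) shows that both the left- and right-hand sides reduce to $(-1)^m \overline{\alpha}_{\vec{\mu},l,m}\, Y_{\vec{\mu},l,-m}$, which establishes the result. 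I do not anticipate a genuine obstacle here; the only point requiring care is tracking the sign $(-1)^m$ consistently through the re-indexing, since a slip there would flip the parity relation and invalidate the coefficient identity.
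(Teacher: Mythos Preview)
Your proposal is correct and follows essentially the same approach as the paper: both arguments use $f=\overline{f}$, apply the conjugation relation (\ref{dDStdBasisParity}), re-index over the symmetric range of $m$, and invoke uniqueness of the Fourier--Laplace expansion to obtain the coefficient identity $\alpha_{\vec{\mu},l,m}=(-1)^m\overline{\alpha}_{\vec{\mu},l,-m}$, from which (\ref{ConjFormula}) follows. The only cosmetic difference is that the paper subtracts the two expansions and equates the result to zero, whereas you compare them directly.
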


\begin{proof}
Below we first use that $f$ is real-valued, second, we use \ref{dDStdBasisParity}, and third we reindex the second sum and combine like terms. 

\begin{align*}
\begin{Small}\text{$
0
$}\end{Small}
 \ &= \  
\begin{Small}\text{$
 \sum_{\mu_1 = 0}^{\infty}\ \sum_{{\mu_1,\cdots,\mu_{d-3},l,m}} \alpha_{\vec{\mu},l,m}Y_{\vec{\mu},l,m} -  \sum_{\mu_1 = 0}^{\infty}\ \sum_{{\mu_1,\cdots,\mu_{d-3},l,m}}\overline{\alpha}_{\vec{\mu},l,m}\overline{Y}_{\vec{\mu},l,m}   
$}\end{Small}  \\
&= \ 
\begin{Small}\text{$
 \sum_{\mu_1 = 0}^{\infty}\ \sum_{{\mu_1,\cdots,\mu_{d-3},l,m}} \alpha_{\vec{\mu},l,m}Y_{\vec{\mu},l,m} -  \sum_{\mu_1 = 0}^{\infty}\ \sum_{{\mu_1,\cdots,\mu_{d-3},l,m}}(-1)^m\overline{\alpha}_{\vec{\mu},l,m} Y_{\vec{\mu},l,-m}
$}\end{Small}  \\
&= \ \begin{Small}\text{$
 \sum_{\mu_1 = 0}^{\infty}\ \sum_{{\mu_1,\cdots,\mu_{d-3},l,m}} \left[ \alpha_{\vec{\mu},l,m} - (-1)^m\overline{\alpha}_{\vec{\mu},l,-m}\right]Y_{\vec{\mu},l,m} 
$}\end{Small} 
\end{align*}
\noindent Because 0 has a unique Fourier-Laplace expansion we conclude
\begin{equation*}
\begin{Small}\text{$
\alpha_{\vec{\mu},l,m} - (-1)^m\overline{\alpha}_{\vec{\mu},l,-m} \ = \ 0
$}\end{Small}
\end{equation*}
\noindent So we have $\overline{\alpha}_{\vec{\mu},l,m} =  (-1)^m\alpha_{\vec{\mu},l,-m}$ which together with  $\overline{Y}_{\vec{\mu},l,m} =  (-1)^m Y_{\vec{\mu},l,-m}$ gives the result.
\end{proof}

\subsection*{Acknowledgements} 
We would like to acknowledge helpful discussions with Chee Han Tan and Robert Viator on asymptotic analysis for Steklov eigenvalue problems in general dimensions.

\subsection*{Data Availability Statement} 
The research datasets/codes associated with this article are available in Zenodo, under the reference \\
doi.org/10.5281/zenodo.10034741 \cite{Schroeder_2023_10034741}.

\bibliographystyle{plain}
\bibliography{References_revision}

\end{document}